\pgfplotsset{compat=newest}
\newcommand{\R}{\mathbb R}
\newcommand{\bA}{\mathbf A}
\newcommand{\bH}{\mathbf H}
\newcommand{\bI}{\mathbf I}
\newcommand{\bP}{\mathbf P}
\newcommand{\bV}{\mathbf V}
\newcommand{\bg}{\mathbf g}
\newcommand{\bn}{\mathbf n}
\newcommand{\be}{\mathbf e}
\newcommand{\bu}{\mathbf u}
\newcommand{\bU}{\mathbf U}
\newcommand{\bv}{\mathbf v}
\newcommand{\bw}{\mathbf w}
\newcommand{\bx}{\mathbf x}
\newcommand{\bbf}{\mathbf f}
\newcommand{\A}{\mathcal A}
\newcommand{\T}{\mathcal T}
\newcommand{\divG}{{\mathop{\,\rm div}}_{\Gamma}}
\newcommand{\gradG}{\nabla_{\Gamma}}
\newcommand{\cT}{\mathcal T}
\newcommand{\OGamma}{\Omega^\Gamma_h}
\renewcommand{\div}{\textrm{div}\ \!}
\newcommand{\tr}{{\rm tr}}
\newcommand{\la}{\left\langle}
\newcommand{\ra}{\right\rangle}
\newtheorem{assumption}{Assumption}[section]
\newtheorem{remark}{Remark}[section]
\begin{document}
\title{A Trace Finite Element Method  for Vector-Laplacians on Surfaces}
\author{Sven Gro{\ss}\thanks{Institut f\"ur Geometrie und Praktische  Mathematik, RWTH-Aachen
University, D-52056 Aachen, Germany (gross@igpm.rwth-aachen.de).}
\and
Thomas Jankuhn\thanks{Institut f\"ur Geometrie und Praktische  Mathematik, RWTH-Aachen
University, D-52056 Aachen, Germany (jankuhn@igpm.rwth-aachen.de).}
\and Maxim A. Olshanskii\thanks{Department of Mathematics, University of Houston, Houston, Texas 77204-3008 (molshan@math.uh.edu); Partially supported by NSF through the Division of Mathematical Sciences grant 1717516.}
\and Arnold Reusken\thanks{Institut f\"ur Geometrie und Praktische  Mathematik, RWTH-Aachen
University, D-52056 Aachen, Germany (reusken@igpm.rwth-aachen.de).}
}
\maketitle
%
\begin{abstract}
 We consider a vector-Laplace problem   posed on a 2D surface embedded in a 3D domain, which results from the modeling of surface fluids based on exterior Cartesian differential operators. The main topic of this paper is the development and analysis of a  finite element method  for the discretization of this surface partial differential equation. We apply the trace finite element technique,  in which finite element spaces  on a background shape-regular tetrahedral mesh that is  surface-independent are used for discretization.
In order  to satisfy  the constraint that the solution vector field is tangential to the surface we introduce  a Lagrange multiplier. We  show  well-posedness of the resulting saddle point formulation.  A discrete variant of this formulation is introduced which contains suitable stabilization terms and is based on trace finite element spaces. For this method we derive optimal discretization error bounds. Furthermore  algebraic properties of the resulting discrete saddle point problem are studied. In particular an optimal Schur complement preconditioner is proposed. Results of a numerical experiment are included.
\end{abstract}
\begin{keywords}
surface fluid equations, surface vector-Laplacian, trace finite element method
 \end{keywords}

\section{Introduction}
Fluid equations  on manifolds appear in the literature on  mathematical modeling  of emulsions, foams and  biological  membranes, e.g.~\cite{scriven1960dynamics,slattery2007interfacial,arroyo2009,brenner2013interfacial,rangamani2013interaction,rahimi2013curved}.
We refer the reader to the recent contributions~\cite{Jankuhn1,Gigaetal} for  derivations of governing surface Navier--Stokes equations in terms of  exterior Cartesian differential operators for the general case of a viscous incompressible material  surface, which is embedded in 3D and may evolve in time. This Navier--Stokes and other models of viscous fluidic surfaces or interfaces  involve  the \emph{vector-Laplace operator} treated in this paper.  We note that there are different definitions of surface vector-Laplacians, cf. Remark~\ref{remLaplacian}. In this paper we treat a vector-Laplace problem that results from the modeling of surface fluids based on exterior Cartesian differential operators.

Several important properties of surface fluid equations, such as existence, uniqueness and regularity of weak solutions, their continuous dependence on initial data and a relation of these equations to the problem of finding geodesics on the group of volume preserving diffeomorphisms have been studied in the  literature,
e.g.,~\cite{ebin1970groups,Temam88,taylor1992analysis,arnol2013mathematical,mitrea2001navier,arnaudon2012lagrangian,miura2017singular}. Concerning the development and analysis of numerical  methods for surface fluid equations there are  very few papers, e.g., \cite{barrett2014stable,nitschke2012finite,reuther2015interplay,holst2012geometric,hansbo2016analysis,ReuskenZhang} and research on this topic has started only recently.  Much more is known on discretization methods for \emph{scalar} elliptic and parabolic PDEs on surfaces; see the review of surface finite element methods in~\cite{DEreview,olshanskii2016trace}.

In this paper we introduce and analyze  a finite element method for the numerical solution of a vector-Laplace problem   posed on a 2D surface embedded in a 3D domain. The approach developed here benefits from the embedding of the surface in $\mathbb{R}^3$ and uses elementary tangential calculus  to formulate the equations in terms of exterior differential operators in Cartesian coordinates. Following this paradigm, the finite element spaces we use are also tailored to an ambient background mesh. This mesh is surface-independent and consists of shape-regular tetrahedra.
As in previous work on scalar elliptic and parabolic surface PDEs (cf. the overview paper \cite{olshanskii2016trace}) we use the \emph{trace} of such an outer finite element space for the discretization of the  vector-Laplace problem.  Hence, the method that we present is a special unfitted finite element method.
One distinct difficulty of applying (both fitted and unfitted) finite element methods to surface vector-Laplace and surface Navier--Stokes equations is to satisfy numerically the constraint that the solution vector field $\bu_h$ is tangential to the surface $\Gamma$, i.e., $\bu_h\cdot\bn=0$, where $\bn$ is the normal vector field to $\Gamma$, cf. the discussion in Remark~\ref{rem_t}.  The method that we present  handles this constraint weakly by introducing a Lagrange multiplier. The resulting saddle point variational formulation is discretized by using standard trace finite element spaces. In the discrete variational formulation certain consistent stabilization terms are included which are essential  for discrete inf-sup stability, algebraic stability, and for the derivation of (optimal) preconditioners for the discrete problem.

The main contributions of this paper are the following. We introduce the Lagrange multiplier formulation for the continuous problem and show  well-posedness of the resulting saddle point formulation. We present a finite element variational formulation which contains suitable stabilization terms and is based on trace finite element spaces. For this method we work out an error  analysis. This analysis shows that for discrete stability and optimal discretization error bounds the background space for the Lagrange multiplier can be chosen as piecewise polynomial of the same order or one order lower as for the primal variable. A further main contribution of the paper is the analysis of algebraic properties of the resulting discrete saddle point problem. In particular we derive an optimal Schur complement preconditioner. We note that in the error analysis we do \emph{not} include geometric errors induced by an approximation $\Gamma_h \approx \Gamma$. The analysis of the effects of such geometric errors is left
for future research.

Our approach is very different from the one based on finite element exterior calculus suitable for discretizing the Hodge Laplacian on hypersurfaces; see  \cite{holst2012geometric}. In the recent paper~\cite{hansbo2016analysis} a finite element method for a similar vector-Laplace problem is studied. That method, however, uses a penalty technique instead of  a Lagrange multiplier formulation and  requires meshes fitted to the surface. Finally we note that the finite element methods for surface Navier--Stokes equations presented in \cite{nitschke2012finite,reuther2015interplay} are based on a surface curl-formulation, which is not applicable to the surface  vector-Laplace problem that we consider. We also note that no discretization error analyses are given in \cite{nitschke2012finite,reuther2015interplay}. None of these related papers have considered unfitted finite element methods.

This paper is meant to be the first one in a series of papers devoted to numerical simulation of  fluid equations on (evolving) manifolds.
Our longer-term goal is to provide efficient and reliable computational tools for the numerical solution of fluid equations on a time-dependent surface $\Gamma(t)$ including the cases when parametrization of $\Gamma(t)$ is not explicitly available and $\Gamma(t)$ may undergo topological changes. This motivates our choice to use unfitted surface-independent meshes to define finite element spaces --- a methodology that proved to work very
well for scalar PDEs posed on $\Gamma(t)$~\cite{olshanskii2014eulerian,olshanskii2014error}.

The remainder of the paper is organized as follows. We introduce in section~\ref{s_cont} the vector-Laplace model probem and  notions of  tangential differential calculus. We give a weak formulation of the problem with Lagrange multiplier and show its well-posedness. An unfitted finite element method known as the TraceFEM for the surface vector-Laplace problem is introduced in section~\ref{s_TraceFEM}. In section~\ref{s_error} an error analysis of this method is presented. We derive discrete LBB stability for certain pairs of Trace FE spaces. The main result of this section  is an optimal order error estimate in the energy norm. An optimal order discretization error estimate in the $L^2$ norm is shown in section~\ref{s_L2}. In section~\ref{s_cond} we prove that the
spectral condition number of the resulting saddle point stiffness matrix is bounded by $c h^{-2}$, with a constant  $c$ that is independent of the position of the surface $\Gamma$ relative to the underlying triangulation.  We also present an optimal Schur complement preconditioner. Numerical results in section~\ref{sectExp} illustrate  the performance of the method in terms of discretization error convergence and efficiency of the linear solver.


\section{Continuous problem}\label{s_cont}
Assume that $\Gamma$ is a closed sufficiently smooth surface in $\mathbb{R}^3$. The outward pointing unit normal on $\Gamma$ is denoted by $\bn$, and the orthogonal projection on the tangential plane is given by $\bP=\bP(x):= \bI - \bn(x)\bn(x)^T$, $x \in \Gamma$.  For vector functions $\bu:\, \Gamma \to \mathbb{R}^3$ we use a constant extension from $\Gamma$ to its neighborhood $\mathcal{O}(\Gamma)$ along the normals $\bn$, denoted by $\bu^e :\,\mathcal{O}(\Gamma)\to\mathbb{R}^3$.  Note that on $\Gamma$ we have $\nabla\bu^e= \nabla (\bu\circ p)=\nabla\bu^e\bP$, with $\nabla \bu:= (\nabla u_1~ \nabla u_2 ~\nabla u_3)^T \in \mathbb{R}^{3 \times 3}$ for vector functions $\bu$ (note the transpose; this notation is usual in computational fluid dynamics).  For scalar functions $u:\, \mathcal{O}(\Gamma) \to \Bbb{R}$ the gradient $\nabla u$ denotes the column vector consisting of the partial derivatives. In the remainder  this locally unique extension $\bu^e$ to a small neighborhood  of $\Gamma$ is also
denoted by $\bu$.
On $\Gamma$ we consider the surface strain tensor \cite{GurtinMurdoch75} given by
\begin{equation} \label{strain}
 E_s(\bu):= \frac12 \bP (\nabla \bu +\nabla \bu^T)\bP = \frac12(\nabla_\Gamma \bu + \nabla_\Gamma \bu^T), \quad \nabla_\Gamma \bu:= \bP \nabla \bu \bP.
 \end{equation}
We also use the surface divergence operators for  $\bu: \Gamma \to \R^3$ and $\bA: \Gamma \to \mathbb{R}^{3\times 3}$.  These are defined as follows:
\begin{align*}
 \divG \bu & := \tr (\gradG \bu)= \tr (\bP (\nabla\bu) \bP)=\tr (\bP (\nabla\bu))=\tr ((\nabla\bu) \bP),  \\
 \divG \bA  & := \left( \divG (\be_1^T \bA),\,
               \divG (\be_2^T \bA),\,
               \divG (\be_3^T \bA)\right)^T,
\end{align*}
with $\be_i$ the $i$th basis vector in $\R^3$.
For a given  force vector $\mathbf{f} \in L^2(\Gamma)^3$, with $\mathbf{f}\cdot\bn=0$, we consider the following elliptic partial differential equation: determine $\bu:\, \Gamma \to \R^3$ with $\bu\cdot\bn =0$  and
\begin{equation} \label{strongform}
 - \bP \divG (E_s(\bu)) + \bu=\mathbf{f} \quad \text{on}~\Gamma.
\end{equation}
We added the zero order term $\bu$ on the left-hand side in \eqref{strongform} to avoid technical details related to the kernel of the strain tensor $E_s$.
\begin{remark} \label{remLaplacian} \rm
In this paper we  consider the operator $ \bP \divG \circ E_s$ because it is a key component in the modeling of Newtonian surface fluids and fluidic membranes \cite{scriven1960dynamics,GurtinMurdoch75,barrett2014stable,Gigaetal,Jankuhn1}.
We note that in the literature there are different formulations of the surface Navier--Stokes equations, and some of these are formally obtained by substituting Cartesian differential operators by their geometric counterparts~\cite{Temam88,cao1999navier} rather than from  first mechanical principles. These formulations may involve different surface Laplace type operators.
In the recent preprint \cite{hansbo2016analysis} the  Bochner (also called rough) Laplacian $\bu \to \Delta_\Gamma\bu:=\bP \divG (\gradG \bu)$ is treated numerically. Another Laplacian operator, which in a natural way arises in differential geometry and exterior calculus is the so-called Hodge Laplacian.
The diagram below (from \cite{Jankuhn1}) and identities ~\eqref{LaplacianALL} illustrate some `correspondences' between Cartesian and different surface operators. For $\bu$ on $\Gamma$ we assume $\bu\cdot\bn=0$.
\[
\begin{array}{rcccccc}
\text{\rm In}~~\mathbb{R}^{3}\,: &-\div(\nabla\bu+\nabla^T\bu)& \overset{\div\bu=0}{=}  & -\Delta\bu& {=}  & (\mbox{rot}^T\mbox{rot}-\nabla\div)\bu\\
 &\wr& &\wr& &\wr\\
 \text{On $\Gamma$}\,: & \underbrace{-\bP \divG (2 E_s(\bu))}& \overset{\divG\bu=0}{\neq}  &  \underbrace{-\Delta_\Gamma\bu}& {\neq}  & \underbrace{-\Delta_\Gamma^H\bu}\\
 &\text{surface}& &\text{Bochner }& &\text{Hodge}\\
 &\text{diffusion}& &\text{Laplacian}& &\text{Laplacian}\\
\end{array}
\]
For a smooth surface $\Gamma\subset\mathbb{R}^3$ with Gauss curvature $K$ we have, cf.  Lemma~2.1 in~\cite{Jankuhn1} and the Weitzenb\"{o}ck identity~\cite{rosenberg1997laplacian}, the following equalities for a tangential field $\bu$ :
\begin{equation}\label{LaplacianALL}
-\bP \divG (2E_s(\bu))=-\Delta_\Gamma\bu-K\bu=-\Delta_\Gamma^H\bu-2K\bu\quad\text{on}~\Gamma,
\end{equation}
where for the first equality to hold, $\bu$ should satisfy $\divG\bu=0$.
\end{remark}
\ \\[1ex]

For the weak formulation of this vector-Laplace problem we use the space $\bV:=H^1(\Gamma)^3$, with norm
 \begin{equation} \label{H1norm}
  \|\bu\|_{1}^2:=\int_{\Gamma}\|\bu(s)\|_2^2 + \|\nabla\bu (s)\|_2^2\,ds,
 \end{equation}
where $\|\cdot\|_2$ denotes the vector and matrix $2$-norm. Note that due to $\nabla \bu=\nabla\bu^e= \nabla \bu\bP$ on $\Gamma$ only {tangential} derivatives are included in this $H^1$-norm. The corresponding  space of tangential vector fields is denoted by
\begin{equation}   \label{defVT}
 \bV_T:= \{\, \bu \in \bV~|~ \bu\cdot \bn =0 \quad \text{on}~~\Gamma\,\}.
\end{equation}
For $\bu \in \bV$ we use the following notation for the orthogonal decomposition into tangential and normal parts:
\[ \bu = \bu_T + u_N\bn,\quad \bu_T\cdot\bn=0.
\]
We introduce the bilinear form
\begin{align*}
a(\bu,\bv)& :=  \int_\Gamma E_s(\bu):E_s(\bv) + \bu \cdot \bv \, ds= \int_\Gamma {\rm tr}\big(E_s(\bu) E_s(\bv)\big) + \bu \cdot \bv \, ds, ~~\bu,\bv \in \bV.
\end{align*}
For given  $\mathbf{f}$ as above
we consider the following variational formulation of \eqref{strongform}: determine
$\bu=\bu_T \in \bV_T$ such that
\begin{equation} \label{vectLaplace}
 a(\bu_T,\bv_T)= (\mathbf{f},\bv_T)_{L^2(\Gamma)} \quad \text{for all}~~\bv_T \in \bV_T.
\end{equation}
The bilinear form $a(\cdot,\cdot)$ is continuous on $\bV_T$. Ellipticity of $a(\cdot,\cdot)$ on $\bV_T$ follows from the following surface Korn inequality, which is derived in \cite{Jankuhn1}.
\begin{lemma} \label{Kornlemma}
Assume $\Gamma$ is $C^2$ smooth. There exists $c_K >0$ such that
 \begin{equation} \label{korn}
\|\bu\|_{L^2(\Gamma)}+ \|E_s(\bu)\|_{L^2(\Gamma)} \geq c_K \|\bu\|_{1} \quad \text{for all}~~\bu \in \bV_T.
 \end{equation}
\end{lemma}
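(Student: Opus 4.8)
The plan is to control the full tangential gradient by its symmetric part through an \emph{integrated Korn identity} on the closed surface, with all curvature contributions absorbed as lower-order $L^2$ terms; the $C^2$ assumption enters only to guarantee that these curvature terms are bounded.

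First I would reduce the $H^1$-seminorm to the projected gradient $\gradG\bu=\bP\nabla\bu\bP$. Since $\|\bu\|_1$ only involves the tangential derivative matrix $\nabla\bu=\nabla\bu\,\bP$ on $\Gamma$, I split it as $\nabla\bu\,\bP=\gradG\bu+\bn\bn^T\nabla\bu\,\bP$, a Frobenius-orthogonal decomposition because $\bP\bn=0$. For $\bu\in\bV_T$ the constraint $\bu\cdot\bn=0$ may be differentiated tangentially to give $\bn^T\nabla\bu\,\bP=-\bu^T(\nabla\bn)\bP$, so the normal row is bounded pointwise by $\|\nabla\bn\|_\infty\,|\bu|$. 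As $\nabla\bn$ (the Weingarten map) is bounded under the $C^2$ assumption, this yields $\|\nabla\bu\|_{L^2(\Gamma)}^2\le\|\gradG\bu\|_{L^2(\Gamma)}^2+C\|\bu\|_{L^2(\Gamma)}^2$. Hence it suffices to bound $\|\gradG\bu\|_{L^2(\Gamma)}$ by $\|E_s(\bu)\|_{L^2(\Gamma)}+\|\bu\|_{L^2(\Gamma)}$.

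For the latter I would expand $\|E_s(\bu)\|_{L^2(\Gamma)}^2=\tfrac12\|\gradG\bu\|_{L^2(\Gamma)}^2+\tfrac12\int_\Gamma\gradG\bu:(\gradG\bu)^T$ and integrate the mixed term by parts with the surface divergence theorem; since $\Gamma$ is closed there are no boundary terms. In the Euclidean case this mixed term equals $\int(\divG\bu)^2\ge0$, but on $\Gamma$ the two projected derivatives do not commute, and their commutator is precisely the Gauss curvature. Tracking this, together with the terms in which a derivative falls on $\bP$ (i.e.\ on the second fundamental form), produces an identity of the schematic form $\|E_s(\bu)\|_{L^2(\Gamma)}^2=\tfrac12\|\gradG\bu\|_{L^2(\Gamma)}^2+\tfrac12\|\divG\bu\|_{L^2(\Gamma)}^2+R(\bu)$ with $|R(\bu)|\le C\|\bu\|_{L^2(\Gamma)}^2$. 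Discarding the nonnegative divergence term and moving $R$ across gives $\|\gradG\bu\|_{L^2(\Gamma)}^2\le2\|E_s(\bu)\|_{L^2(\Gamma)}^2+C\|\bu\|_{L^2(\Gamma)}^2$. Combined with the reduction step this is $\|\bu\|_1^2\le C(\|E_s(\bu)\|_{L^2(\Gamma)}+\|\bu\|_{L^2(\Gamma)})^2$, i.e.\ \eqref{korn}.

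The main obstacle is establishing the integrated identity rigorously, namely carrying out the surface integration by parts and controlling the commutator of the tangential derivatives that generates the curvature remainder $R(\bu)$. Because the projector $\bP=\bP(x)$ is $x$-dependent, every term in which a derivative hits $\bP$ must be accounted for; the whole point of the lemma is that, under the $C^2$ assumption, all such terms are of lower order and bounded by $\|\bu\|_{L^2(\Gamma)}^2$. I note that a pure compactness/contradiction argument does \emph{not} close here: from a would-be counterexample $\bu_k$ with $\|\bu_k\|_1=1$ and $\|\bu_k\|_{L^2(\Gamma)}+\|E_s(\bu_k)\|_{L^2(\Gamma)}\to0$ one only extracts weak $H^1$-convergence, which cannot force $\|\gradG\bu_k\|_{L^2(\Gamma)}\to0$; the quantitative identity above (or, alternatively, a localization to coordinate patches combined with the flat Korn second inequality) is what supplies the missing control.
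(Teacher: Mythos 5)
Your plan is correct, but note first that the paper itself contains no proof of this lemma: inequality \eqref{korn} is simply quoted from \cite{Jankuhn1}, so there is no in-paper argument to compare with line by line; judged on its own merits, your route is sound and, pleasantly, it can be closed using identities the paper already records. The reduction step is right: differentiating $\bu\cdot\bn=0$ tangentially gives $\bn^T\nabla\bu\,\bP=-\bu^T\bH\bP$ on $\Gamma$, and since $\bP\nabla\bu\bP$ and $\bn\bn^T\nabla\bu\bP$ are Frobenius-orthogonal, $\|\nabla\bu\|_{L^2(\Gamma)}^2\le\|\gradG\bu\|_{L^2(\Gamma)}^2+\|\bH\|_{L^\infty(\Gamma)}^2\|\bu\|_{L^2(\Gamma)}^2$, with $\bH$ bounded precisely because $\Gamma$ is $C^2$. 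Your ``schematic'' integrated identity is also genuinely true and need not be rederived from scratch: it is the integrated form of the Weitzenb\"ock-type identity behind \eqref{LaplacianALL} in Remark~\ref{remLaplacian}, whose general (not divergence-free) version reads $-\bP \divG (2E_s(\bu))=-\Delta_\Gamma\bu-K\bu-\gradG\divG\bu$ for tangential $\bu$ (Lemma~2.1 in \cite{Jankuhn1}); testing with $\bu$ and integrating by parts on the closed surface yields
\begin{equation*}
2\|E_s(\bu)\|_{L^2(\Gamma)}^2=\|\gradG\bu\|_{L^2(\Gamma)}^2+\|\divG\bu\|_{L^2(\Gamma)}^2-\int_\Gamma K\,|\bu|^2\,ds,
\end{equation*}
so your remainder is exactly $R(\bu)=-\tfrac12\int_\Gamma K|\bu|^2\,ds$, bounded by $\tfrac12\|K\|_{L^\infty(\Gamma)}\|\bu\|_{L^2(\Gamma)}^2$. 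Discarding the nonnegative divergence term and combining with the reduction step gives \eqref{korn}, as you say. What your approach buys, compared with the paper's outsourcing of the proof, is a short self-contained argument in the extrinsic calculus of section~\ref{s_cont} that makes the curvature dependence of $c_K$ explicit.

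Two points should be tightened to make this a complete proof. First, the integration by parts (equivalently, testing the pointwise operator identity) requires more regularity than $\bu\in\bV_T$; argue by density of smooth tangential fields in $\bV_T$ (mollify an extension and project with $\bP$, which is $C^1$ for a $C^2$ surface), noting that both sides of \eqref{korn} are continuous in the $H^1$-norm. Second, if you carry out the extrinsic bookkeeping directly rather than passing to covariant derivatives, intermediate terms of the form (first derivative of $\bu$)$\times\bu\times$(curvature) do appear; these are not pointwise $O(|\bu|^2)$, but they are harmless since $C\|\gradG\bu\|_{L^2(\Gamma)}\|\bu\|_{L^2(\Gamma)}\le \varepsilon\|\gradG\bu\|_{L^2(\Gamma)}^2+C_\varepsilon\|\bu\|_{L^2(\Gamma)}^2$ absorbs them, so the argument is robust even before they cancel into the pure Gauss-curvature term. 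Your closing observation is also correct: a bare compactness/contradiction argument only produces weak $H^1$ convergence and cannot force the seminorm to vanish, so a quantitative identity of the above type (or localization to the flat Korn inequality) is indeed indispensable.
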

Hence, the weak formulation \eqref{vectLaplace} is a well-posed problem. The unique solution is denoted by $\bu^\ast=\bu_T^\ast$.

\begin{remark}\label{rem_t}\rm
The weak formulation \eqref{vectLaplace} is not very suitable for a finite element Galerkin discretization, because we then need finite element functions that are \emph{tangential} to $\Gamma$, which are not easy to construct.
If $\Gamma\cap K$ is curved in a simplex $K$ where $\bu_h$ is polynomial, then it is easy to see that enforcing $\bu_h\cdot\bn=0$ on $\Gamma\cap K$ may lead to `locking', i.e. only $\bu_h=0$ satisfies the constraint.
Alternatively, one can approximate a smooth manifold $\Gamma$ by a polygonal surface $\Gamma_h$ (in practice, this is often done  for the purpose of numerical integration; moreover, only $\Gamma_h$ is available if finding the position of the surface is  part of the problem). In this case the surface $\Gamma_h$ has a \textit{discontinuous} normal field $\bn_h$ and enforcing the tangential constraint, $\bu_h\cdot\bn_h=0$ on $\Gamma_h$, for a \textit{continuous} finite element vector field $\bu_h$ may lead to a locking effect as well.
\end{remark}
\ \\[1ex]
In view of the remark above we introduce, in the same spirit as in \cite{hansbo2016stabilized,hansbo2016analysis,Jankuhn1}, a weak formulation in a space that is larger than $\bV_T$ and which allows nonzero normal components in the surface vector fields. However, different from the approach  used in these papers we treat the tangential condition with the help of a Lagrange multiplier.  The following basic relation will be very useful:
\begin{equation} \label{idfund}
E_s(\bu)=E_s(\bu_T) + u_N \bH,
\end{equation}
where $\bH := \nabla \bn$ is the shape operator (second fundamental form) on $\Gamma$.
We introduce the following
Hilbert space:
\[
\bV_\ast  :=\{\, \bu \in L^2(\Gamma)^3\,:\,\bu_T \in \bV_T,~u_N\in L^2(\Gamma)\,\},  \quad\text{with}~~
\|\bu\|_{V_\ast}^2:=\|\bu_T\|_{1}^2+\|u_N\|_{L^2(\Gamma)}^2.
\]
Note that $\bV_\ast\sim \bV_T \oplus L^2(\Gamma)$.
Based on the identity \eqref{idfund} we introduce, with some abuse of notation, the bilinear form
\begin{equation} \label{defaalt}
a(\bu,\bv) :=  \int_\Gamma {\rm tr}\big((E_s(\bu_T)+u_N\bH)( E_s(\bv_T)+ v_N \bH)\big) +\bu \cdot \bv \, ds, \quad \bu,\bv \in \bV_\ast.
\end{equation}
This bilinear form is well-defined and continuous on $\bV_\ast$.  We enforce the condition $\bu\in \bV_T$ with the help of a Lagrange multiplier.
For given $\bg \in L^2(\Gamma)^3$ (note that we allow $\bg$ not necessarily tangential) we  introduce the following saddle point problem: determine $(\bu,\lambda) \in \bV_\ast \times L^2(\Gamma)$ such that
 \begin{equation} \label{weak1a} \begin{aligned}
           a(\bu,\bv) +(\bv\cdot \bn,\lambda)_{L^2(\Gamma)} &=(\mathbf{g},\bv)_{L^2(\Gamma)} &\quad &\text{for all }\bv \in \bV_\ast, \\
           (\bu\cdot\bn,\mu)_{L^2(\Gamma)} & = 0 &\qquad &\text{for all }\mu \in L^2(\Gamma).
                                      \end{aligned}
 \end{equation}

Well-posedness of this saddle point problem is derived in the following theorem.
\begin{theorem}
 The problem \eqref{weak1a} is well-posed. Its unique solution $(\bu^\ast,\lambda) \in \bV_\ast \times L^2(\Gamma)$ has the following properties:
 \begin{align}
   1.\quad&\bu^\ast \cdot \bn  =0,\\
    2.\quad&\bu_T^\ast =\bu_T,~~  \text{where $\bu_T$ is the unique solution of \eqref{vectLaplace} with $\mathbf{f}:=\bg_T=\bP \bg$}, \label{hhj} \\
3.\quad&\lambda  = g_N- \tr \big(E_s(\bu_T^\ast) \bH)\big),~~ \text{for}~\bg=\bg_T+g_N\bn. \label{charlambda}
\end{align}
\end{theorem}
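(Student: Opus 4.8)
The plan is to establish well-posedness of \eqref{weak1a} via the standard Brezzi saddle point theory and then to read off the three properties by inserting suitably structured test fields. Write $b(\bv,\mu):=(\bv\cdot\bn,\mu)_{L^2(\Gamma)}=(v_N,\mu)_{L^2(\Gamma)}$ for the constraint form. Since $a(\cdot,\cdot)$ is already known to be continuous on $\bV_\ast$ and $b$ is trivially continuous, it remains to verify (i) ellipticity of $a$ on $\ker b$ and (ii) an inf-sup condition for $b$.

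For (i), the kernel $\ker b=\{\bu\in\bV_\ast:\ (u_N,\mu)_{L^2(\Gamma)}=0~\forall\mu\in L^2(\Gamma)\}$ consists exactly of those $\bu$ with $u_N=0$, i.e. $\ker b=\bV_T$ under the identification $\bV_\ast\cong\bV_T\oplus L^2(\Gamma)$. On this subspace the term $u_N\bH$ in \eqref{defaalt} drops out, so $a$ reduces to $\int_\Gamma\tr(E_s(\bu_T)E_s(\bv_T))+\bu_T\cdot\bv_T\,ds$, and ellipticity follows directly from the surface Korn inequality of Lemma~\ref{Kornlemma}. For (ii), given $\mu\in L^2(\Gamma)$ I would simply take $\bv=\mu\bn$, so that $\bv_T=0$ and $v_N=\mu$; then $b(\bv,\mu)=\|\mu\|_{L^2(\Gamma)}^2$ while $\|\bv\|_{V_\ast}=\|\mu\|_{L^2(\Gamma)}$, giving the inf-sup constant $\beta=1$. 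Brezzi's theorem then yields existence and uniqueness of $(\bu^\ast,\lambda)\in\bV_\ast\times L^2(\Gamma)$.

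The three properties follow by specializing the test functions. Property~1 is immediate: the second equation of \eqref{weak1a} forces $(\bu^\ast\cdot\bn,\mu)_{L^2(\Gamma)}=0$ for all $\mu$, hence $u_N^\ast=0$ and $\bu^\ast=\bu_T^\ast$. For Property~2 I would test the first equation only against $\bv=\bv_T\in\bV_T$ (so $v_N=0$): the multiplier term vanishes, the form $a$ again collapses to its $\bV_T$-restriction, and the right-hand side becomes $(\bg,\bv_T)_{L^2(\Gamma)}=(\bg_T,\bv_T)_{L^2(\Gamma)}$ since $\bv_T\cdot\bn=0$. This is exactly \eqref{vectLaplace} with $\mathbf{f}:=\bg_T$, so uniqueness of that problem gives $\bu_T^\ast=\bu_T$.

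For Property~3 I would instead test against purely normal fields $\bv=v_N\bn$ with $v_N\in L^2(\Gamma)$ arbitrary. Using $u_N^\ast=0$ and $\bu_T^\ast\cdot\bn=0$, the bilinear form \eqref{defaalt} reduces to $a(\bu_T^\ast,v_N\bn)=\int_\Gamma v_N\,\tr(E_s(\bu_T^\ast)\bH)\,ds$, while the right-hand side is $\int_\Gamma g_N v_N\,ds$. The first equation of \eqref{weak1a} then reads $\int_\Gamma v_N\big(\tr(E_s(\bu_T^\ast)\bH)+\lambda-g_N\big)\,ds=0$ for all $v_N\in L^2(\Gamma)$, and the fundamental lemma of the calculus of variations yields \eqref{charlambda}. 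I do not expect a genuine obstacle here: the architecture is the standard Brezzi argument, and the only point requiring a little care is the clean splitting $\bV_\ast\cong\bV_T\oplus L^2(\Gamma)$, which makes the constraint kernel coincide with $\bV_T$ and lets the surface Korn estimate apply verbatim.
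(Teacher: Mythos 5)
Your proposal is correct and takes essentially the same route as the paper's own proof: Brezzi saddle point theory with ellipticity on the kernel $\ker b=\bV_T$ supplied by the surface Korn inequality of Lemma~\ref{Kornlemma}, the inf-sup condition obtained with the purely normal field $\bv=\mu\bn$ (constant $1$), and the three properties extracted by testing with $\bv=\bv_T$ and $\bv=v_N\bn$, using $E_s(v_N\bn)=v_N\bH$ from \eqref{idfund}. All steps match the paper's argument, so there is nothing to fix.
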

\begin{proof}
 Note that $\bv \in \bV_\ast$ satisfies  $(\bv\cdot\bn,\mu)_{L^2(\Gamma)}=0$ for all $\mu \in L^2(\Gamma)$ iff $\bv \in \bV_T$. From this and \eqref{korn} it follows that
 $a(\cdot,\cdot)$ is elliptic on $\bV_T$, the subspace of $\bV_\ast$
consisting of all functions $\bu$ that satisfy the second equation in \eqref{weak1a}. The multiplier bilinear form $(\bv,\mu) \mapsto (\bv\cdot \bn,\mu)_{L^2(\Gamma)}$ has the inf-sup property
\[
\inf_{\mu \in L^2(\Gamma)} \sup_{\bv \in \bV_\ast}  \frac{(\bv\cdot \bn,\mu)_{L^2(\Gamma)}}{\|\bv\|_{V_\ast}\|\mu\|_{L^2(\Gamma)}} \geq \inf_{\mu \in L^2(\Gamma)} \sup_{v_N \in L^2(\Gamma)} \frac{(v_N,\mu)_{L^2(\Gamma)}}{\|v_N\|_{L^2(\Gamma)}\|\mu\|_{L^2(\Gamma)}}=1.
\]
Furthermore, the bilinear forms are continuous. Hence, we have a well-posed saddle point formulation, with a unique solution denoted by $(\bu^\ast,\lambda)$. From the second equation in \eqref{weak1a} one obtains $\bu^\ast \cdot \bn=0$. If in the first equation we restrict to $\bv=\bv_T \in V_T$ we see that $ \bu_T^\ast$ satisfies the same variational problem as in \eqref{vectLaplace} with $\mathbf{f}:=\bP \bg$, hence, \eqref{hhj} holds.

If in the first equation in \eqref{weak1a} we take $\bv=v_N \bn$ and use $ \bu^\ast \cdot \bn=0$ we get
\[ \begin{split}
  (\lambda, v_N)_{L^2(\Gamma)} & = (\bg, v_N \bn)_{L^2(\Gamma)}- a( \bu^\ast, v_N\bn) \\  & =(g_N, v_N)_{L^2(\Gamma)} - \int_{\Gamma} \tr \big(E_s(\bu_T^\ast) E_s(v_N \bn)\big) \, ds \\ & =(g_N, v_N)_{L^2(\Gamma)} -(\tr \big(E_s(\bu_T^\ast) \bH)\big), v_N)_{L^2(\Gamma)} \quad \text{for all}~~v_N \in L^2(\Gamma),
\end{split} \]
hence we have the characterization as in \eqref{charlambda}.
\end{proof}
\ \\[1ex]
 From  \eqref{charlambda} it follows that if $\bu_T^\ast$ has smoothness $\bu_T^\ast \in H^m(\Gamma)^3$ and the manifold is sufficiently smooth (hence $\bH$ sufficiently smooth) then we have $\lambda \in H^{m-1}(\Gamma)$. Note that if $\bH=0$ and $g_N=0$ then $\lambda=0$.

\begin{remark}\label{rem_aug} \rm In the proof above we used that the form $a(\cdot,\cdot)$ is elliptic on $\bV_T$, the subspace of $\bV_\ast$
consisting of all functions that satisfy the second equation in \eqref{weak1a}.
Note the inequality
\begin{align*}
a(\bu,\bu) &\ge \varepsilon\|E_s(\bu_T^\ast)\|^2_{L^2(\Gamma)}-\frac{\varepsilon}{1-\varepsilon}\|\bH u_N\|^2_{L^2(\Gamma)}+\|\bu\|^2_{L^2(\Gamma)}\\
&\ge  \varepsilon\|E_s(\bu_T^\ast)\|^2_{L^2(\Gamma)}+\big(1-\frac{\varepsilon}{1-\varepsilon} \|\bH\|_{L^\infty(\Gamma)}^2\big)\|\bu\|^2_{L^2(\Gamma)}~~\forall\,\bu\in\bV_\ast,~~\forall~\varepsilon < 1.
\end{align*}
With $\varepsilon_0:=\frac12(1+\|\bH\|_{L^\infty(\Gamma)}^2)^{-1}$ and the Korn inequality \eqref{korn} we get
\begin{align*}
 a(\bu,\bu) \geq \varepsilon_0\big(\|E_s(\bu_T^\ast)\|^2_{L^2(\Gamma)} + \|\bu\|^2_{L^2(\Gamma)}\big)\geq \frac12 \varepsilon_0 c_K^2 \|\bu_T\|_1^2 + \varepsilon_0 \|u_N\|_{L^2(\Gamma)}^2
\end{align*}
 for all $\bu \in \bV_\ast$. Hence,  the bilinear form $a(\cdot,\cdot)$ is also elliptic on $\bV_\ast$.
Note that the ellipticity constant depends on the curvature of $\Gamma$.
\end{remark}
\ \\
%
\begin{remark} \rm
 Instead of the weak formulation in \eqref{weak1a} one can also consider a penalty formulation, without using a Lagrange multiplier $\lambda$. Such an approach is used for a similar Bochner-Laplace problem in \cite{hansbo2016analysis}. This formulation is as follows: determine $\bu \in \bV_\ast$ such that
\begin{equation}
   a(\bu,\bv)+ \eta (\bu\cdot\bn,\bv\cdot \bn)_{L^2(\Gamma)}= (\mathbf{f},\bv)_{L^2(\Gamma)} \quad \text{for all}~~\bv \in \bV_\ast,
\end{equation}
with $\eta >0$ (sufficiently large).
From ellipticity and continuity it follows that this weak formulation has a unique solution, denoted by $\hat \bu$. Opposite to the solution $\bu^\ast$ of \eqref{weak1a}, the solution $\hat \bu$ does not have the property $\hat \bu \cdot \bn=0$, and in general $\hat \bu_T \neq \bu_T^\ast$ holds. Using standard arguments one easily derives the error bound
\[
 \|\hat \bu_T- \bu_T^\ast\|_{V^\ast} \leq c \eta^{-\frac12} \|\mathbf{f}\|_{L^2(\Gamma)}.
\]
Hence, as usual in this type of  penalty method, one has to take $\eta$ sufficiently large depending on the desired accuracy of the approximation.
\end{remark}
\ \\
\begin{remark} \rm
The  analysis of well-posedness above and the finite element method presented in the next section have immediate extensions to the case of the Bochner Laplacian on $\Gamma$. For this, one replaces the bilinear form in \eqref{vectLaplace} by
\[
a^B(\bu,\bv) :=  \int_\Gamma (\gradG \bu:\gradG \bu + \bu \cdot \bv) \, ds, \qquad \bu,\bv \in \bV,
\]
and instead of \eqref{idfund} one uses $\gradG \bu=\gradG \bu_T+u_N\bH$ for further analysis. In this case,  Korn's inequality \eqref{korn}  is replaced by Poincare's inequality on $\Gamma$ (cf.~\cite{hansbo2016analysis}). Based on the second equality in \eqref{LaplacianALL} and the tangential variational formulation for the Bochner Laplacian, one can also consider the bilinear form
\[
a^H(\bu,\bv) :=  \int_\Gamma (\gradG \bu:\gradG \bu + (1+K)\bu \cdot \bv) \, ds, \qquad \bu,\bv \in \bV,
\]
for an equation with the Hodge Laplacian.
This formulation, however, is less convenient for the analysis of  well-posedness in the framework of this paper, since the Gauss curvature $K$ in general does not have a fixed sign. Moreover,  in a numerical method one then has to approximate the Gauss curvature $K$ based on a ``discrete'' (e.g., piecewise planar) surface  approximation, which is known to be a delicate numerical issue.
\end{remark}

\section{Trace Finite Element Method}\label{s_TraceFEM}
For the discretization of the variational problem \eqref{weak1a} we use the trace finite element approach (TraceFEM)~ \cite{ORG09}. For this, we assume a fixed polygonal domain $\Omega \subset \R^3$ that strictly contains $\Gamma$. We use a family of shape regular tetrahedral triangulations $\{\T_h\}_{h >0}$ of $\Omega$. The subset of tetrahedra that have a nonzero intersection with $\Gamma$ is collected in the set denoted by $\T_h^\Gamma$. For simplicity, in the analysis of the method,  we assume
$\{\T_h^\Gamma\}_{h >0}$ to be quasi-uniform. The domain formed by all tetrahedra in $\T_h^\Gamma$ is denoted by $\OGamma:=\text{int}(\overline{\cup_{T \in \T_h^\Gamma} T})$.
On $\T_h^\Gamma$ we use a standard finite element space of continuous functions that are piecewise polynomial of degree $k$. This so-called \emph{outer finite element space} is denoted by $V_h^k$.  In the stabilization terms added to the finite element formulation (see below), we need an extension of the normal vector field $\bn$ from $\Gamma$ to $\OGamma$. For this we use $\bn^e = \nabla d$, where $d$ is the signed distance function to $\Gamma$. In practice, this signed distance function is often not available and we then use approximations as discussed in Remark~\ref{remimplementation}. Another aspect related to implementation is that in practice it is often  not easy to compute integrals over the surface $\Gamma$ with high order accuracy. This may be due to the fact that $\Gamma$ is defined implicitly as the zero level of a level set function and a parametrization of $\Gamma$ is not available. This issue of ``geometric errors'' and of a feasible approximation $\Gamma_h \approx \Gamma$ will also be
addressed in Remark~\ref{remimplementation}. Below in the presentation and analysis of the TraceFEM we use the exact extended normal $\bn=\bn^e$ and we assume exact integration over $\Gamma$.

We introduce the stabilized bilinear forms, with $\bU:=\{\, \bv \in H^1(\OGamma)^3~|~\bv_{|\Gamma} \in \bV\,\}$, $M:=H^1(\OGamma)$,
\begin{align*}
 A_h(\bu,\bv) &:= a(\bu,\bv) + \rho \int_{\OGamma} (\nabla \bu \bn)\cdot (\nabla  \bv \bn) \, dx, \qquad \bu,\bv \in \bU, \\
b(\bu,\mu)& := (\bu\cdot \bn,\mu)_{L^2(\Gamma)} + \tilde \rho  \int_{\OGamma} (\bn^T \nabla \bu \bn) (\bn \cdot \nabla \mu) \, dx \\
  &= (u_N,\mu)_{L^2(\Gamma)} + \tilde \rho  \int_{\OGamma} (\bn\cdot \nabla u_ N)(\bn \cdot \nabla \mu) \, dx,     \qquad \bu \in \bU, ~ \mu \in M.
\end{align*}
  Such ``volume normal derivative'' stabilizations have recently been studied in \cite{grande2017higher,burmanembedded}.
The parameters in the stabilizations may be $h$-dependent, $\rho \sim h^{m},~\tilde \rho \sim h^{\tilde m}$. One can consider different scalings, i.e., $m \neq \tilde m$. From the analysis below it follows that the best choice is $m=\tilde m$. To simplify the presentation we set $\rho=\tilde \rho$. Based on the analysis in \cite{grande2017higher} of \emph{scalar} surface problems we restrict to
\begin{equation} \label{scalingrho}
 h \lesssim \rho=\tilde \rho \lesssim h^{-1}.
\end{equation}
Here and further in the paper we write $x\lesssim y$ to state that the inequality  $x\le c y$ holds for quantities $x,y$ with a constant $c$, which is independent of the mesh parameter $h$ and the position of $\Gamma$ over the background mesh. Similar we give sense to $x\gtrsim y$; and $x\sim y$ will mean that both $x\lesssim y$ and $x\gtrsim y$ hold.
For fixed $k,l \geq 1$ we take finite element spaces
\[ \bU_h:= (V_h^k)^3\subset \bU, \quad M_h :=V_h^l\subset M,
\]
  for the velocity $\bu$ and  the Lagrange multiplier $\lambda$, respectively. The finite element method (TraceFEM) that we consider is as follows: determine $(\bu_h, \lambda_h) \in \bU_h \times M_h$ such that
\begin{equation} \label{discrete}
 \begin{aligned}
  A_h(\bu_h,\bv_h) + b(\bv_h,\lambda_h) & =(\mathbf{g},\bv_h)_{L^2(\Gamma)} &\quad &\text{for all } \bv_h \in \bU_h \\
  b(\bu_h,\mu_h) & = 0 &\quad &\text{for all }\mu_h \in M_h.
 \end{aligned}
\end{equation}

\begin{remark} \label{remimplementation}
 \rm As noted above, in the implementation of this method one typically replaces $\Gamma$ by an approximation $\Gamma_h \approx \Gamma$ such that integrals over $\Gamma_h$ can be efficiently computed. Furthermore, the exact normal $\bn$ is approximated by  $\bn_h \approx \bn $. In the literature on finite element methods for surface PDEs there are standard procedures resulting in a piecewise planar surface approximation $\Gamma_h$ with ${\rm dist}(\Gamma,\Gamma_h) \lesssim h^2$.
 If one is interested in surface FEM with higher order surface approximation, we refer to the recent paper \cite{grande2017higher}, where one finds an efficient method based on an isoparametric mapping derived from a level set representation of $\Gamma$.
 In \cite{demlow2009higher} another higher order surface approximation method is treated. In the numerical experiments in section~\ref{sectExp} we use a piecewise planar surface approximation. Also for the  construction of suitable normal approximations $\bn_h \approx \bn$ several techniques are available in the literature. One possibility is to use $\bn_h(\bx)=\frac{\nabla \phi_h(\bx)}{\|\nabla \phi_h(\bx)\|_2}$, where $\phi_h$ is a finite element approximation of a level set function $\phi$ which characterizes $\Gamma$. This technique is used in section~\ref{sectExp}. In this paper we do not analyze the effect of such geometric errors, i.e., we only analyze the finite element method \eqref{discrete}.
\end{remark}

\section{Error analysis of TraceFEM}\label{s_error}
In this section we present an error analysis of the TraceFEM \eqref{discrete}.
We first address consistency of this stabilized formulation. The solution  $(\bu^\ast, \lambda)$ of \eqref{weak1a}, which is defined only on $\Gamma$, can be extended by constant values along normals to  a neighborhood $\mathcal{O}(\Gamma)$ of $\Gamma$ such that $\OGamma\subset\mathcal{O}(\Gamma)$. This extended solution $((\bu^\ast)^e, \lambda^e)$ is also denoted by $(\bu^\ast, \lambda)$. Hence we have $ \nabla \bu^\ast \bn=0$,  $\bn \cdot \nabla (\bu^\ast \cdot \bn)=0$, $\bn\cdot \nabla \lambda =0$ on $\OGamma$. Using these properties and $(\bU_h)_{|\Gamma} \subset \bV_\ast$, $(M_h)_{|\Gamma} \subset L^2(\Gamma)$ we get the following \emph{consistency result}:
\begin{equation} \label{consisdiscrete}
 \begin{aligned}
  A_h(\bu^\ast,\bv_h) + b(\bv_h,\lambda) = a(\bu^\ast,\bv_h)+ (\bv_h \cdot \bn,\lambda)_{L^2(\Gamma)}& =(\mathbf{g},\bv_h)_{L^2(\Gamma)}&\quad&\forall~\bv_h \in \bU_h, \\
  b(\bu^\ast,\mu_h)= (\bu^\ast \cdot \bn,\mu_h)_{L^2(\Gamma)} & = 0 &\quad&\forall~\mu_h \in M_h.
 \end{aligned}
\end{equation}
We now address continuity of the bilinear forms. For this we introduce the semi-norms
\begin{align}
 \|\bu\|_U^2 & :=A_h(\bu,\bu), \quad \bu \in \bU, \label{defUh} \\
  \|\mu\|_M^2 &:= \|\mu\|_{L^2(\Gamma)}^2 + \rho \|\bn \cdot \nabla \mu\|_{L^2(\OGamma)}^2, \quad \mu \in M. \label{defmuast}
\end{align}
\begin{lemma} \label{lemcont} The following holds
\begin{align}
 A_h(\bu,\bv) & \leq \|\bu\|_U\|\bv\|_U \quad \text{for all }\bu,\bv \in \bU, \label{cont1}\\
 b(\bu,\mu) & \leq \|\bu\|_U \|\mu\|_M \quad \text{for all }\bu \in \bU, \mu \in M. \label{cont2}
\end{align}
\end{lemma}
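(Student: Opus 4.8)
The plan is to recognize both bounds as instances of the Cauchy--Schwarz inequality. Estimate \eqref{cont1} is the Cauchy--Schwarz inequality for the symmetric positive semidefinite bilinear form $A_h$, while \eqref{cont2} follows by bounding the two constituent terms of $b$ separately and assembling them with a two-term (discrete) Cauchy--Schwarz inequality.

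For \eqref{cont1} I would first check that $A_h(\cdot,\cdot)$ is symmetric and positive semidefinite on $\bU$. Symmetry is immediate from the definition, since the integrand $\tr\big((E_s(\bu_T)+u_N\bH)(E_s(\bv_T)+v_N\bH)\big)+\bu\cdot\bv$ of $a(\cdot,\cdot)$ and the stabilization integrand $(\nabla\bu\,\bn)\cdot(\nabla\bv\,\bn)$ are both symmetric in $\bu,\bv$. For positive semidefiniteness, note that the matrix $E_s(\bu_T)+u_N\bH$ is symmetric, so $\tr\big((E_s(\bu_T)+u_N\bH)^2\big)=\|E_s(\bu_T)+u_N\bH\|_F^2\ge 0$; together with $\rho>0$ this gives $\|\bu\|_U^2=A_h(\bu,\bu)\ge 0$. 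For any symmetric positive semidefinite form the Cauchy--Schwarz inequality $A_h(\bu,\bv)\le A_h(\bu,\bu)^{1/2}A_h(\bv,\bv)^{1/2}=\|\bu\|_U\|\bv\|_U$ holds (expand $A_h(\bu+t\bv,\bu+t\bv)\ge 0$ as a nonnegative quadratic in $t$ and read off the discriminant condition), which is exactly \eqref{cont1}.

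For \eqref{cont2} I would start from the second representation $b(\bu,\mu)=(u_N,\mu)_{L^2(\Gamma)}+\rho\int_{\OGamma}(\bn\cdot\nabla u_N)(\bn\cdot\nabla\mu)\,dx$ and isolate the velocity contributions via two bounds. First, the zero-order term in $a$ gives $\|u_N\|_{L^2(\Gamma)}^2\le\|\bu\|_{L^2(\Gamma)}^2\le a(\bu,\bu)\le\|\bu\|_U^2$. Second, using the identity $\bn\cdot\nabla u_N=\bn^T\nabla\bu\,\bn$ (the equality of the two displayed forms of $b$) together with $|\bn|=1$, the pointwise estimate $|\bn\cdot\nabla u_N|=|\bn^T\nabla\bu\,\bn|\le|\nabla\bu\,\bn|$ yields $\rho\|\bn\cdot\nabla u_N\|_{L^2(\OGamma)}^2\le\rho\|\nabla\bu\,\bn\|_{L^2(\OGamma)}^2\le\|\bu\|_U^2$. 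Applying Cauchy--Schwarz to each of the two terms of $b$ and then the two-term inequality $a_1b_1+a_2b_2\le(a_1^2+a_2^2)^{1/2}(b_1^2+b_2^2)^{1/2}$ with $a_1=\|u_N\|_{L^2(\Gamma)}$, $a_2=\rho^{1/2}\|\nabla\bu\,\bn\|_{L^2(\OGamma)}$, $b_1=\|\mu\|_{L^2(\Gamma)}$, $b_2=\rho^{1/2}\|\bn\cdot\nabla\mu\|_{L^2(\OGamma)}$ gives $a_1^2+a_2^2\le\|\bu\|_U^2$ and $b_1^2+b_2^2=\|\mu\|_M^2$, which is \eqref{cont2}.

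The only nonroutine point is the passage in \eqref{cont2} from the normal-derivative stabilization of $b$, naturally written in terms of $u_N=\bu\cdot\bn$, to the velocity seminorm $\|\bu\|_U$, which controls $\nabla\bu\,\bn$ rather than $\nabla u_N$ directly. This hinges on the identity $\bn\cdot\nabla u_N=\bn^T\nabla\bu\,\bn$, i.e.\ precisely the equivalence of the two displayed definitions of $b$, which rests on $\bn=\nabla d$ satisfying $\bH\bn=0$ for the signed-distance extension; combined with $|\bn|=1$ it turns the $u_N$-stabilization into a quantity dominated by the stabilization already present in $\|\bu\|_U$. Everything else reduces to symmetry, positivity, and Cauchy--Schwarz, so no additional estimates are required.
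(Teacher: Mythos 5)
Your proof is correct and takes essentially the same route as the paper: Cauchy--Schwarz for the symmetric positive semidefinite form $A_h$ gives \eqref{cont1}, and for \eqref{cont2} you use the pointwise identity $\bn\cdot\nabla u_N=\bn^T\nabla\bu\,\bn$ (which the paper derives from $\nabla\bn\,\bn=0$ and the symmetry of $\nabla\bn$), the bound $|\bn^T\nabla\bu\,\bn|\le\|\nabla\bu\,\bn\|_2$, termwise Cauchy--Schwarz combined via the two-term discrete Cauchy--Schwarz, and $\|\bu\|_{L^2(\Gamma)}^2+\rho\|\nabla\bu\,\bn\|_{L^2(\OGamma)}^2\le A_h(\bu,\bu)=\|\bu\|_U^2$. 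The only cosmetic differences are that you spell out the symmetry and positive semidefiniteness of $A_h$, which the paper leaves implicit, and use $\|u_N\|_{L^2(\Gamma)}$ in place of the equivalent bound by $\|\bu\|_{L^2(\Gamma)}$ in the first term.
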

\begin{proof}
 The result in \eqref{cont1} follows from Cauchy-Schwarz inequalities. Note that  due to $\nabla \bn \bn =0$ and the symmetry of $\nabla \bn$ we obtain $\bn\cdot \nabla u_N=\bn \cdot\nabla( \bu \cdot \bn)= \bn^T \nabla \bu \bn + \bn^T \nabla \bn \bu =  \bn^T\nabla \bu \bn  $. Hence, $|\bn \cdot\nabla  u_N| \leq \|\nabla \bu \bn\|_2$ holds (pointwise at $x \in \mathcal{O}(\Gamma)$).  Using this we get for $\bu \in \bU$, $\mu \in M$:
\begin{align}
 |b(\bu,\mu)| & \leq |(\bu \cdot \bn, \mu)_{L^2(\Gamma)}| +\rho |(\bn \cdot \nabla u_N  , \bn \cdot \nabla \mu)_{L^2(\OGamma)}| \nonumber \\
 & \leq \|\bu\|_{L^2(\Gamma)}\|\mu\|_{L^2(\Gamma)}+ \rho \|\bn \cdot \nabla u_N\|_{L^2(\OGamma)}\|\bn \cdot \nabla \mu\|_{L^2(\OGamma)} \nonumber \\
 & \leq \big(\|\bu\|_{L^2(\Gamma)}^2 + \rho  \| \nabla \bu\bn\|_{L^2(\OGamma)}^2 \big)^\frac12 \big(\|\mu\|_{L^2(\Gamma)}^2 +\rho \|\bn \cdot \nabla \mu\|_{L^2(\OGamma)}^2 \big)^\frac12 \label{bineq} \\
  & \leq A_h(\bu,\bu)^\frac12   \|\mu\|_M= \|\bu\|_U\|\mu\|_M. \nonumber
\end{align}
This completes the proof.
\end{proof}
\ \\[2ex]
The following result is crucial for the stability and error analysis of the method.
\begin{lemma} \label{lemcrucial} The following uniform norm equivalence holds:
 \begin{equation} \label{fund1}
  h \|v_h\|_{L^2(\Gamma)}^2 + h^2 \|\bn \cdot \nabla v_h\|_{L^2(\OGamma)}^2 \sim \|v_h\|_{L^2(\OGamma)}^2\quad \text{for all}~~v_h \in V_h^k.
\end{equation}
\end{lemma}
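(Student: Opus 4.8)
The plan is to prove the two bounds hidden in the equivalence separately, since the two directions need genuinely different tools. The ``$\lesssim$'' direction, bounding $h\|v_h\|_{L^2(\Gamma)}^2+h^2\|\bn\cdot\nabla v_h\|_{L^2(\OGamma)}^2$ by $\|v_h\|_{L^2(\OGamma)}^2$, is the routine one. For the surface term I would use the standard finite element trace inequality $\|v_h\|_{L^2(\Gamma\cap T)}^2\lesssim h^{-1}\|v_h\|_{L^2(T)}^2+h\|\nabla v_h\|_{L^2(T)}^2$, valid on each $T\in\T_h^\Gamma$ with a constant independent of how $\Gamma$ cuts $T$ (this relies only on shape-regularity and the bounded curvature of $\Gamma$). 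Summing over $T\in\T_h^\Gamma$ and invoking the inverse inequality $\|\nabla v_h\|_{L^2(T)}\lesssim h^{-1}\|v_h\|_{L^2(T)}$ gives $\|v_h\|_{L^2(\Gamma)}^2\lesssim h^{-1}\|v_h\|_{L^2(\OGamma)}^2$, hence $h\|v_h\|_{L^2(\Gamma)}^2\lesssim\|v_h\|_{L^2(\OGamma)}^2$. The normal-derivative term is immediate from $|\bn\cdot\nabla v_h|\le\|\nabla v_h\|_2$ together with the same inverse inequality: $h^2\|\bn\cdot\nabla v_h\|_{L^2(\OGamma)}^2\le h^2\|\nabla v_h\|_{L^2(\OGamma)}^2\lesssim\|v_h\|_{L^2(\OGamma)}^2$.

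For the reverse bound $\|v_h\|_{L^2(\OGamma)}^2\lesssim h\|v_h\|_{L^2(\Gamma)}^2+h^2\|\bn\cdot\nabla v_h\|_{L^2(\OGamma)}^2$ I would exploit the tubular-neighborhood structure of $\Gamma$. Since every $T\in\T_h^\Gamma$ meets $\Gamma$, each $x\in\OGamma$ obeys $|d(x)|\lesssim h$, so $\OGamma$ sits inside a tube of width $O(h)$ on which $\Phi(s,r)=s+r\bn(s)$ is a diffeomorphism with Jacobian bounded above and below by constants depending only on the curvature of $\Gamma$. Writing $x=p(x)+r(x)\bn$ with $p(x)\in\Gamma$ the closest point and $r(x)=d(x)$, and using that $\bn=\bn^e=\nabla d$ is constant along the normal ray, so that $\frac{d}{d\tau}v_h(p(x)+\tau\bn)=\bn\cdot\nabla v_h$, the fundamental theorem of calculus (legitimate since $v_h$ is a continuous piecewise polynomial, hence Lipschitz) yields
\[
v_h(x)=v_h(p(x))+\int_0^{r(x)}(\bn\cdot\nabla v_h)(p(x)+\tau\bn)\,d\tau .
\]

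Squaring this identity, using $|r(x)|\lesssim h$ and Cauchy--Schwarz on the integral, and then transforming the volume integral over $\OGamma$ into the $(s,r)$ coordinates of $\Phi$ (bounded Jacobian, $r$-extent $O(h)$), the first term contributes $\int_{\OGamma}|v_h(p(x))|^2\,dx\lesssim h\|v_h\|_{L^2(\Gamma)}^2$, while the second contributes $\int_{\OGamma}|r(x)|\int_0^{r(x)}|\bn\cdot\nabla v_h|^2\,d\tau\,dx\lesssim h^2\int_\Gamma\!\int|\bn\cdot\nabla v_h|^2\,d\tau\,ds$, which the same change of variables turns back into $h^2\|\bn\cdot\nabla v_h\|_{L^2}^2$ over the tube. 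Adding the two gives the claimed bound.

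The main obstacle is geometric bookkeeping, and it has two facets. The first is that \emph{all} constants must be independent of the position of $\Gamma$ relative to the mesh; this is secured by using the position-independent trace inequality above and the curvature-only bounds on the tubular-coordinate Jacobian. The second, and more delicate, point is that the normal-ray integration in the fundamental theorem of calculus must stay inside $\OGamma$, where $\nabla v_h$ is actually defined: the segment $[p(x),x]$ joins two points of $\OGamma$, has length $|r(x)|\lesssim h$, and lies within $O(h)$ of $\Gamma$, and one must check (using shape-regularity and $h$ small) that it is covered by elements of $\T_h^\Gamma$, so that the tube integral of $|\bn\cdot\nabla v_h|^2$ is genuinely controlled by $\|\bn\cdot\nabla v_h\|_{L^2(\OGamma)}^2$ and not merely by an integral over a strictly larger strip. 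Making this containment precise — or quantifying and absorbing the mismatch between $\OGamma$ and the exact tube — is where the real effort in the proof lies.
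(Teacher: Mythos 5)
Your ``$\lesssim$'' direction is correct and coincides with the paper's own argument: the elementwise trace inequality (this is \eqref{fund1B}, from \cite{Hansbo02}) combined with the finite element inverse inequality $\|v_h\|_{H^1(\OGamma)}\lesssim h^{-1}\|v_h\|_{L^2(\OGamma)}$ gives $h\|v_h\|_{L^2(\Gamma)}^2\lesssim \|v_h\|_{L^2(\OGamma)}^2$, and the normal-derivative term follows from $|\bn\cdot\nabla v_h|\le\|\nabla v_h\|_2$ and the same inverse inequality.

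The reverse bound, however, has a genuine gap, located exactly at the point you flagged and deferred: the normal segment $[p(x),x]$ need \emph{not} be covered by elements of $\T_h^\Gamma$, no matter how small $h$ is or how shape-regular the mesh. Concretely, let $\Gamma$ be locally nearly flat and let $T'$ be a tetrahedron lying entirely on one side of $\Gamma$ at distance, say, $10^{-2}h$ from it; then $T'\notin\T_h^\Gamma$, yet the normal ray from $p(x)\in\Gamma$ to a point $x$ in a nearby cut tetrahedron can pass straight through $T'$. Since $v_h\in V_h^k$ is defined only on $\OGamma$, your fundamental-theorem identity $v_h(x)=v_h(p(x))+\int_0^{r(x)}(\bn\cdot\nabla v_h)\,d\tau$ is then not even well defined along such rays, and after the Fubini step your right-hand side becomes an integral of $|\bn\cdot\nabla v_h|^2$ over the full tube $\{x:\,|d(x)|<ch\}$, which is not contained in $\OGamma$ for any choice of $c$, while $v_h$ admits no canonical extension to the complement. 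The mismatch therefore cannot simply be ``quantified and absorbed'': the obvious repair of integrating along element-dependent directions (a segment $[s_x,x]$ with $s_x\in T\cap\Gamma$ stays in $T$ by convexity) only controls the \emph{full} directional derivative, i.e., it proves the estimate with $h^2\|\nabla v_h\|_{L^2(\OGamma)}^2$ on the right, which the purely normal stabilization term does not dominate. This is precisely why the paper does not prove this direction from scratch: it quotes it as \eqref{fund1A}, i.e., Lemma~7.6 of \cite{grande2017higher} (with $\Psi={\rm id}$), whose proof constitutes the nontrivial content of the lemma. As written, your proposal reduces the statement to that cited result without supplying it.
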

\begin{proof}
A fundamental result derived in \cite{grande2017higher}  (Lemma 7.6) is:
\begin{equation} \label{fund1A}
 \|v_h\|_{L^2(\OGamma)}^2 \lesssim h \|v_h\|_{L^2(\Gamma)}^2 + h^2 \|\bn \cdot \nabla v_h\|_{L^2(\OGamma)}^2 \quad \text{for all}~~v_h \in V_h^k.
\end{equation}
(This follows by taking $\Psi={\rm id}$ in the analysis in section 7.2 in \cite{grande2017higher}).
We combine this with the following estimate, cf.~\cite{Hansbo02}:
\begin{equation} \label{fund1B}
 h \|v\|_{L^2(\Gamma)}^2 \lesssim  \|v\|_{L^2(\OGamma)}^2+h^2\|v\|_{H^1(\OGamma)}^2 \quad \text{for all}~v \in H^1(\OGamma),
\end{equation}
and a standard finite element inverse inequality $\|v_h\|_{H^1(\OGamma)} \lesssim h^{-1}\|v_h\|_{L^2(\OGamma)}$ for all $v_h \in V_h^k$.
This completes the proof.
\end{proof}
\ \\[2ex]
Using \eqref{fund1} and \eqref{scalingrho} we get
\begin{equation}\label{elliptic} \begin{split}
 A_h(\bu_h,\bu_h) & = a(\bu_h,\bu_h) + \rho \int_{\OGamma} ( \nabla \bu_h\bn)\cdot ( \nabla  \bu_h\bn ) \, dx \\
  & \gtrsim \|\bu_h\|_{L^2(\Gamma)}^2 +  h \int_{\OGamma} (\nabla \bu_h\bn)\cdot ( \nabla  \bu_h\bn) \, dx \\
 & =\sum_{i=1}^3 \Big( \|(u_h)_i\|_{L^2(\Gamma)}^2 + h \|\bn \cdot \nabla (u_h)_i\|_{L^2(\OGamma)}^2\Big) \\ &  \gtrsim h^{-1} \sum_{i=1}^3 \|(u_h)_i\|_{L^2(\OGamma)}^2 = c h^{-1}\|\bu_h\|_{L^2(\OGamma)}^2,
\end{split}
\end{equation}
for all $\bu_h \in \bU_h$. This  implies that $A_h(\cdot,\cdot)$ \emph{is a scalar product on} $\bU_h$.
Using \eqref{fund1} and $\rho \gtrsim h$ we get
\begin{equation}\label{lower2}
 \|\mu_h\|_M^2  \gtrsim h^{-1} \|\mu_h\|_{L^2(\OGamma)}^2 \quad \text{for all}~~\mu_h \in M_h.
\end{equation}
This in particular implies that $\|\cdot\|_M$ corresponds to a \emph{scalar product on} $M_h$.
We now turn to the discrete inf-sup property.
\begin{lemma} \label{lemdiscreteinfsup}
Take $m \geq 1$.
There exist constants $d_1>0$, $d_2 >0$, independent of $h$ and of how $\Gamma$ intersects the outer triangulation, such that:
 \begin{equation} \label{infsupest}
  \sup_{\bv_h \in (V_h^m)^3} \frac{b(\bv_h,\mu_h)}{\|\bv_h\|_U} \geq d_1(1- d_2\sqrt{\rho h}) \|\mu_h\|_M \quad \text{for all}~~\mu_h \in V_h^m.
 \end{equation}
\end{lemma}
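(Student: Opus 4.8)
The plan is to construct an explicit Fortin-type test function and compare it with an ideal continuous one. The natural continuous candidate is $\bw := \mu_h \bn$, where $\bn=\nabla d$ is the extended normal (which is smooth on $\OGamma$ provided $\Gamma$ is sufficiently smooth). Since $|\bn|=1$ and $\nabla\bn\,\bn=0$, its normal component is $w_N=\bw\cdot\bn=\mu_h$ and $\bn\cdot\nabla w_N=\bn\cdot\nabla\mu_h$, so that $b(\bw,\mu_h)=\|\mu_h\|_{L^2(\Gamma)}^2+\rho\|\bn\cdot\nabla\mu_h\|_{L^2(\OGamma)}^2=\|\mu_h\|_M^2$. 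Moreover $\bw_T=\bP\bw=\mu_h\bP\bn=0$ on $\Gamma$, hence by \eqref{idfund} $E_s(\bw)=\mu_h\bH$, and $\nabla\bw\,\bn=(\bn\cdot\nabla\mu_h)\bn$ again by $\nabla\bn\,\bn=0$; together with boundedness of $\bH$ this yields $\|\bw\|_U^2\lesssim\|\mu_h\|_{L^2(\Gamma)}^2+\rho\|\bn\cdot\nabla\mu_h\|_{L^2(\OGamma)}^2=\|\mu_h\|_M^2$. Thus $\bw$ realizes the inf-sup \emph{exactly}, but $\bw\notin(V_h^m)^3$. I therefore take $\bv_h:=I_h\bw\in(V_h^m)^3$, the componentwise nodal interpolant, and write $b(\bv_h,\mu_h)=\|\mu_h\|_M^2-b(\bw-\bv_h,\mu_h)$.

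The crucial observation, which I expect to be the main obstacle, is that the defect $b(\bw-\bv_h,\mu_h)$ must be estimated \emph{directly}, and not through the continuity bound \eqref{cont2}. Indeed, interpolating the varying field $\bn$ introduces first-order tangential derivatives into $\bw-\bv_h$, so $\|\bw-\bv_h\|_U$ is only of size $\|\mu_h\|_M$, which is far too large and would destroy the inf-sup. However, $b$ only sees the \emph{normal} component $e_N:=(\bw-\bv_h)\cdot\bn$ and its normal derivative $\bn\cdot\nabla e_N$, and these are genuinely small. Using standard interpolation estimates on each $T\in\T_h^\Gamma$ together with the inverse inequality $\|\mu_h\|_{H^{j}(T)}\lesssim h^{-j}\|\mu_h\|_{L^2(T)}$ (which, via Leibniz and $\deg\mu_h=m$, gives $|\bw|_{H^{m+1}(T)}=|\mu_h\bn|_{H^{m+1}(T)}\lesssim h^{-m}\|\mu_h\|_{L^2(T)}$), I obtain $\|\bw-\bv_h\|_{L^2(\OGamma)}\lesssim h\|\mu_h\|_{L^2(\OGamma)}$ and $\|\nabla(\bw-\bv_h)\|_{L^2(\OGamma)}\lesssim\|\mu_h\|_{L^2(\OGamma)}$.

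It then remains to convert these into the two pieces of $b(\bw-\bv_h,\mu_h)$. For the surface term, the trace estimate \eqref{fund1B} gives $\|e_N\|_{L^2(\Gamma)}\le\|\bw-\bv_h\|_{L^2(\Gamma)}\lesssim h^{\frac12}\|\mu_h\|_{L^2(\OGamma)}$, and since $\|\mu_h\|_{L^2(\OGamma)}\lesssim h^{\frac12}\|\mu_h\|_M$ by \eqref{lower2}, we get $(e_N,\mu_h)_{L^2(\Gamma)}\lesssim h\|\mu_h\|_M^2\lesssim\sqrt{\rho h}\,\|\mu_h\|_M^2$ using $h\lesssim\rho$ from \eqref{scalingrho}. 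For the stabilization term, $|\bn\cdot\nabla e_N|\le\|\nabla(\bw-\bv_h)\,\bn\|_2$ as in the proof of Lemma~\ref{lemcont}, so $\|\bn\cdot\nabla e_N\|_{L^2(\OGamma)}\lesssim\|\mu_h\|_{L^2(\OGamma)}\lesssim h^{\frac12}\|\mu_h\|_M$; a Cauchy--Schwarz together with $\rho^{\frac12}\|\bn\cdot\nabla\mu_h\|_{L^2(\OGamma)}\le\|\mu_h\|_M$ then yields $\rho(\bn\cdot\nabla e_N,\bn\cdot\nabla\mu_h)_{L^2(\OGamma)}\lesssim\sqrt{\rho h}\,\|\mu_h\|_M^2$. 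Hence $|b(\bw-\bv_h,\mu_h)|\lesssim\sqrt{\rho h}\,\|\mu_h\|_M^2$. The same interpolation bounds, combined with $\rho h\lesssim 1$ and \eqref{lower2}, also give $\|\bw-\bv_h\|_U\lesssim\|\mu_h\|_M$ and hence $\|\bv_h\|_U\lesssim\|\bw\|_U+\|\bw-\bv_h\|_U\lesssim\|\mu_h\|_M$. Combining, $b(\bv_h,\mu_h)\ge(1-C\sqrt{\rho h})\|\mu_h\|_M^2$, and dividing by $\|\bv_h\|_U$ produces the asserted estimate with suitable constants $d_1,d_2$ that are independent of $h$ and of the position of $\Gamma$.
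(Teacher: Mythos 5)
Your proof is correct and follows essentially the same route as the paper: the same Fortin-type candidate $\mu_h\bn$ with $b(\mu_h\bn,\mu_h)=\|\mu_h\|_M^2$, its nodal interpolant $\bv_h=I_m(\mu_h\bn)$, and the same interpolation/inverse-inequality machinery exploiting $|\mu_h|_{H^{m+1}(K)}=0$ together with \eqref{fund1}, \eqref{fund1B} and \eqref{lower2}. The ``direct'' estimate of the defect that you single out as the key point is precisely what the paper does via the intermediate inequality \eqref{bineq} in \eqref{eq1} and the bound \eqref{aux}, which bounds $\|I_m(\mu_h\bn)-\mu_h\bn\|_{L^2(\Gamma)}^2+\rho\|\nabla(I_m(\mu_h\bn)-\mu_h\bn)\bn\|_{L^2(\OGamma)}^2$ by $\rho h\,\|\mu_h\|_M^2$ rather than invoking the continuity bound \eqref{cont2}.
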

\begin{proof} Take $\mu_h \in V_h^m$.
 Note that
 \[
  b(\mu_h \bn, \mu_h)= \|\mu_h\|_M^2.
 \]
Take $\bv_h:= I_m (\mu_h \bn) \in (V_h^m)^3$, where $I_m$ is the nodal (Lagrange) interpolation operator. The latter is well defined, because both $\mu_h$ and $\bn$ are continuous in $\OGamma$. Now note, cf.~\eqref{bineq},
\begin{equation} \label{eq1}
\begin{split}
& |b(\bv_h, \mu_h)|   \geq |b(\mu_h \bn, \mu_h)|- |b(I_m (\mu_h\bn) -\mu_h\bn,\mu_h)| \\  &=\|\mu_h\|_M^2 - |b(I_m (\mu_h\bn) -\mu_h\bn,\mu_h)| \\
 & \geq \big(\|\mu_h\|_M- \big(\|I_m (\mu_h\bn) -\mu_h\bn\|_{L^2(\Gamma)}^2 + \rho  \| \nabla (I_m (\mu_h\bn) -\mu_h\bn) \bn\|_{L^2(\OGamma)}^2 \big)^\frac12 \big)\|\mu_h\|_M.
\end{split} \end{equation}
From $E_s(\mu_h \bn)=\mu_h \bH$ we get $a(\mu_h \bn,\mu_h \bn) \lesssim \|\mu_h\|_{L^2(\Gamma)}^2$ and using this and $\bH \bn=0$ we obtain
\begin{equation} \label{eq2}
 \|\bv_h\|_U \leq \|\mu_h \bn\|_U+ \|I_m (\mu_h\bn) -\mu_h\bn\|_U  \lesssim \|\mu_h\|_M + \|I_m (\mu_h\bn) -\mu_h\bn\|_U.
\end{equation}
We now consider the terms with $I_m (\mu_h\bn) -\mu_h\bn$ in \eqref{eq1} and \eqref{eq2}. We use
standard element-wise interpolation bounds for the Lagrange interpolant, the identity $|\mu_h|_{H^{m+1}(K)}=0$ for the $H^{m+1}$ seminorm of $\mu_h$ over any tetrahedron $K \in \cT_h^\Gamma$, the inverse inequality $\|\mu_h\|_{H^m(K)}\le c h^{-m}\|\mu_h\|_{L^2(K)}$, \eqref{fund1} and the local variant of the estimate \eqref{fund1B}.
We then obtain
\begin{align*}
 & \|I_m (\mu_h\bn) -\mu_h\bn\|_U^2  \\ & =  a(I_m (\mu_h\bn) -\mu_h\bn,I_m (\mu_h\bn) -\mu_h\bn) +\rho \|\nabla\big(I_m (\mu_h\bn) -\mu_h\bn\big)\bn\|_{L^2(\OGamma)}^2 \\
 & \lesssim \sum_{K\in\cT_h^\Gamma}\|I_m (\mu_h\bn) -\mu_h\bn\|_{H^1(K\cap \Gamma)}^2 + \rho \|\nabla(I_m (\mu_h\bn) -\mu_h\bn)\bn\|_{L^2(\OGamma)}^2 \\
 & \lesssim \sum_{K\in\cT_h^\Gamma}\left\{(h^{-1}+\rho)\|I_m (\mu_h\bn) -\mu_h\bn\|_{H^1(K)}^2+h|I_m (\mu_h\bn) -\mu_h\bn|_{H^2(K)}^2\right\}\\
 & \lesssim (h^{-1}+\rho)h^{2m}\sum_{K\in\cT_h^\Gamma}|\mu_h\bn|_{H^{m+1}(K)}^2 \lesssim (h^{-1}+\rho)h^{2m}\sum_{K\in\cT_h^\Gamma}\|\mu_h\|_{H^{m}(K)}^2\\
 & \lesssim (h^{-1}+\rho)\sum_{K\in\cT_h^\Gamma}\|\mu_h\|_{L^2(K)}^2 \sim (1+\rho h)h^{-1}\|\mu_h\|_{L^2(\OGamma)}^2\\
 &\lesssim(1+\rho h)(\|\mu_h\|_{L^2(\Gamma)}^2+h\|\bn\cdot\nabla\mu_h\|_{L^2(\OGamma)}^2)
  \lesssim(1+\rho h)\|\mu_h\|_M^2 \lesssim \|\mu_h\|_M^2.
\end{align*}
From this and \eqref{eq2} we get
\begin{equation} \label{aux1}
 \|\bv_h\|_U\lesssim \|\mu_h\|_M.
\end{equation}
With similar arguments we bound the interpolation terms in \eqref{eq1}:
\begin{equation}\label{aux}
\begin{aligned}
 &  \|I_m (\mu_h\bn) -\mu_h\bn\|_{L^2(\Gamma)}^2 + \rho  \|\nabla \big(I_m (\mu_h\bn) -\mu_h\bn\big)\bn\|_{L^2(\OGamma)}^2 \\
 & \lesssim \sum_{K\in\cT_h^\Gamma}\left\{h^{-1}\|I_m (\mu_h\bn) -\mu_h\bn\|_{L^2(K)}^2+h\|I_m (\mu_h\bn) -\mu_h\bn\|_{H^1(K)}^2\right\} \\ & \quad + \rho \|\nabla \big(I_m (\mu_h\bn) -\mu_h\bn\big)\bn\|_{L^2(\OGamma)}^2 \\
 & \lesssim \sum_{K\in\cT_h^\Gamma}\left\{h^{-1}\|I_m (\mu_h\bn) -\mu_h\bn\|_{L^2(K)}^2+(h+\rho)\|I_m (\mu_h\bn) -\mu_h\bn\|_{H^1(K)}^2\right\} \\
 &\lesssim(h+\rho)h^{2m} \sum_{K\in\cT_h^\Gamma}|\mu_h\bn|_{H^{m+1}(K)}^2\\
 &\lesssim(h^2+\rho h)\|\mu_h\|_M^2 \lesssim \rho h \|\mu_h\|_M^2
\end{aligned}
\end{equation}
Combining this with the results in \eqref{eq1} and \eqref{aux1} completes the proof.
\end{proof}
\ \\[1ex]
\begin{corollary} \label{corolstab} Take $m\geq 1$.
Consider $\rho=c_\alpha h^{1-\alpha}$, $\alpha \in[0,2]$, and assume $h \leq h_0 \leq 1$. Take $c_\alpha$ such that $0< c_\alpha < d_2^{-2} h_0^{\alpha-2}$ with $d_2$ as in \eqref{infsupest}. Then there exists a constant $d >0$, independent of $h$ and of how $\Gamma$ intersects the outer triangulation, such that:
 \begin{equation} \label{infsupestA}
  \sup_{\bv_h \in (V_h^m)^3} \frac{b(\bv_h,\mu_h)}{\|\bv_h\|_U} \geq d \|\mu_h\|_M \quad \text{for all}~~\mu_h \in V_h^m.
 \end{equation}
\end{corollary}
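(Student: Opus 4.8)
The plan is to obtain this directly from Lemma~\ref{lemdiscreteinfsup} by showing that, under the stated choice of $\rho$ and the bound on $c_\alpha$, the degenerating factor $1-d_2\sqrt{\rho h}$ in \eqref{infsupest} stays bounded away from zero by a constant depending only on $c_\alpha$, $h_0$, $\alpha$, $d_1$, and $d_2$, but not on $h$ or on the position of $\Gamma$. The whole statement is a routine specialization of the inf-sup estimate already proved, so no genuinely new ideas are needed.

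First I would substitute $\rho=c_\alpha h^{1-\alpha}$ to compute $\rho h=c_\alpha h^{2-\alpha}$, so that $\sqrt{\rho h}=\sqrt{c_\alpha}\,h^{1-\alpha/2}$. The key observation is that, since $\alpha\in[0,2]$, the exponent satisfies $1-\tfrac{\alpha}{2}\ge 0$; hence $t\mapsto t^{1-\alpha/2}$ is nondecreasing on $[0,\infty)$, and $h\le h_0$ yields $h^{1-\alpha/2}\le h_0^{1-\alpha/2}$. Combining these gives the uniform bound
\[
d_2\sqrt{\rho h}=d_2\sqrt{c_\alpha}\,h^{1-\alpha/2}\le d_2\sqrt{c_\alpha}\,h_0^{1-\alpha/2}=:q .
\]

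Then I would verify $q<1$: squaring gives $q^2=d_2^2\,c_\alpha\,h_0^{2-\alpha}$, and the hypothesis $0<c_\alpha<d_2^{-2}h_0^{\alpha-2}$ is precisely the statement $d_2^2\,c_\alpha\,h_0^{2-\alpha}<1$, i.e. $q<1$. Consequently $1-d_2\sqrt{\rho h}\ge 1-q>0$ uniformly for all $h\le h_0$, and Lemma~\ref{lemdiscreteinfsup} delivers \eqref{infsupestA} with $d:=d_1(1-q)>0$. Since $d_1$ and $d_2$ are independent of $h$ and of how $\Gamma$ cuts the background mesh, the constant $d$ inherits this independence.

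As for obstacles: there is essentially none, this being a direct substitution into Lemma~\ref{lemdiscreteinfsup}. The only point requiring care is the sign of the exponent $1-\alpha/2$; it is exactly the upper restriction $\alpha\le 2$ in the scaling \eqref{scalingrho} that makes this exponent nonnegative and thereby lets $\sqrt{\rho h}$ be controlled uniformly by its value at the largest admissible mesh size $h_0$. Were $\alpha>2$ permitted, $\sqrt{\rho h}$ would grow as $h\to 0$ and the factor $1-d_2\sqrt{\rho h}$ could no longer be kept away from zero, so the argument relies on this constraint in an essential way.
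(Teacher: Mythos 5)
Your proposal is correct and coincides with the paper's intended argument: the corollary is stated without proof precisely because it follows from Lemma~\ref{lemdiscreteinfsup} by the routine substitution you carry out, namely $\sqrt{\rho h}=\sqrt{c_\alpha}\,h^{1-\alpha/2}\le\sqrt{c_\alpha}\,h_0^{1-\alpha/2}$ (using $\alpha\le 2$ and $h\le h_0$) together with the observation that the hypothesis on $c_\alpha$ is exactly $d_2^2 c_\alpha h_0^{2-\alpha}<1$, yielding $d=d_1(1-d_2\sqrt{c_\alpha}\,h_0^{1-\alpha/2})>0$. Your closing remark about the role of the restriction $\alpha\le 2$ correctly identifies why the scaling range \eqref{scalingrho} is essential.
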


\begin{assumption} \label{ass1}
 In the remainder we restrict to $\rho=c_\alpha h^{1-\alpha}$, $\alpha\in [0,2]$,  with $c_\alpha$ as in Corollary~\ref{corolstab}.
\end{assumption}

\begin{corollary} \label{corinfsup}
 For $b(\cdot,\cdot)$ the  discrete inf-sup property holds for the pair of spaces $(\bU_h,M_h)=\big( (V_h^k)^3, V_h^l\big)$ with $ 1 \leq l \leq k$.
The constant in the discrete inf-sup estimate can be taken independent of  $h$  and of how $\Gamma$ intersects the outer triangulation, but depends on $k$.
\end{corollary}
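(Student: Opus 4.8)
The plan is to deduce this directly from Corollary~\ref{corolstab}, which already furnishes the inf-sup estimate for the \emph{equal-order} pair $\big((V_h^m)^3,V_h^m\big)$ for every $m\geq 1$. The only structural fact I need in addition is the nestedness of the outer trace spaces: since $l\leq k$, every continuous piecewise polynomial of degree at most $l$ is in particular a continuous piecewise polynomial of degree at most $k$, so $M_h=V_h^l\subseteq V_h^k$.

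First I would fix $\mu_h\in M_h=V_h^l$ and regard it as an element of $V_h^k$, which is legitimate by the inclusion above. I then apply Corollary~\ref{corolstab} with $m=k$ (this uses Assumption~\ref{ass1} on the stabilization parameter $\rho$, with $c_\alpha$ chosen for degree $k$). Since that estimate holds for \emph{all} elements of $V_h^k$, it holds for our $\mu_h$, giving
\begin{equation*}
 \sup_{\bv_h\in (V_h^k)^3}\frac{b(\bv_h,\mu_h)}{\|\bv_h\|_U}\;\geq\; d\,\|\mu_h\|_M,
\end{equation*}
with $d$ independent of $h$ and of how $\Gamma$ cuts the background mesh. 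As $(V_h^k)^3=\bU_h$, this is precisely the asserted discrete inf-sup property for the pair $(\bU_h,M_h)$. A point worth recording is that the norm $\|\mu_h\|_M$ in \eqref{defmuast} is defined intrinsically on $M=H^1(\OGamma)$ and does not refer to any finite element space, so viewing $\mu_h$ in $V_h^k$ rather than in $V_h^l$ leaves the right-hand side unchanged.

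There is no genuine obstacle here, as all the analytic work---the interpolation bounds, the inverse inequalities, and the smallness condition $d_2\sqrt{\rho h}<1$---has already been carried out in Lemma~\ref{lemdiscreteinfsup} and Corollary~\ref{corolstab}. The only thing to get right is the bookkeeping on the constant: applying the equal-order result at degree $m=k$ yields an inf-sup constant depending on $k$ through the degree-$k$ interpolation constants entering Lemma~\ref{lemdiscreteinfsup}, which matches the claim. Equivalently, one could apply Corollary~\ref{corolstab} at $m=l$ and then enlarge the test space from $(V_h^l)^3$ to $(V_h^k)^3$, since taking the supremum over a larger space can only increase it; this route gives a constant depending on $l\leq k$.
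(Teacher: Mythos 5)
Your proof is correct and is essentially the argument the paper intends (the corollary is stated without explicit proof precisely because it follows from Corollary~\ref{corolstab} via the nestedness $V_h^l\subseteq V_h^k$): viewing $\mu_h\in V_h^l$ as an element of $V_h^k$ and applying the equal-order estimate at $m=k$ gives the claim with a constant depending on $k$, exactly matching the corollary's wording, and your observation that $\|\mu_h\|_M$ is defined on $M=H^1(\OGamma)$ independently of any discrete space correctly disposes of the only point needing care. Your alternative route (apply the lemma at $m=l$, then enlarge the test space to $(V_h^k)^3$, since the supremum can only increase) is equally valid and even yields the slightly better constant depending on $l\leq k$.
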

\ \\[1ex]
From the fact that $A_h(\cdot,\cdot)$ defines a scalar product on $\bU_h$ and the discrete inf-sup property of $b(\cdot,\cdot)$ on $\bU_h \times M_h$ it follows that the discrete problem \eqref{discrete} has a unique solution $(\bu_h,\lambda_h)$.

For the remainder of the error analysis we  apply standard theory of saddle point problems. We introduce the bilinear form
\begin{equation} \label{defk}
 \bA_h\big((\bu,\lambda),(\bv,\mu)\big):= A_h(\bu,\bv)+ b(\bv,\lambda)+b(\bu,\mu), \quad (\bu,\lambda), (\bv,\mu) \in \bU \times M.
\end{equation}
\begin{theorem} \label{thm2}  Let $(\bu^\ast,\lambda)\in \bV_\ast \times L^2(\Gamma)$ be the solution of \eqref{weak1a} and assume that this solution is sufficiently smooth. Furthermore, let  $(\bu_h,\lambda_h) \in (V_h^k)^3\times V_h^l$ be the solution  of \eqref{discrete}. For $1 \leq l \leq k$ the following discretization error bound holds:
\begin{equation}\label{discrbound}
\begin{split}
 & \|\bu^\ast -\bu_h\|_U + \|\lambda - \lambda_h \|_M  \\ & \lesssim h^k\big( 1+ (\rho h)^\frac12\big)\|\bu^\ast\|_{H^{k+1}(\Gamma)} + h^{l+1} \big(1+(\rho/h)^\frac12\big) \|\lambda\|_{H^{l+1}(\Gamma)}.
 \end{split}
\end{equation}
\end{theorem}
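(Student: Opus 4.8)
The plan is to treat \eqref{discrete} as an abstract saddle point problem for the combined bilinear form $\bA_h$ of \eqref{defk} and to apply the standard Brezzi theory, reducing the error to best-approximation errors that are then estimated by the interpolation technique already used in the proof of Lemma~\ref{lemdiscreteinfsup}. The three structural ingredients are all in place: $A_h(\cdot,\cdot)$ is a scalar product on $\bU_h$ with $\|\bu_h\|_U^2=A_h(\bu_h,\bu_h)$ by \eqref{defUh} and \eqref{elliptic}; the continuity bounds \eqref{cont1}--\eqref{cont2} of Lemma~\ref{lemcont} hold in the norms $\|\cdot\|_U$ and $\|\cdot\|_M$; and the discrete inf-sup property for $b(\cdot,\cdot)$ on $\bU_h\times M_h$ holds for $1\le l\le k$ by Corollary~\ref{corinfsup}. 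From these I would first derive a uniform inf-sup estimate for $\bA_h$ on the product space $\bU_h\times M_h$ equipped with $\enorm{(\bv,\mu)}:=\|\bv\|_U+\|\mu\|_M$, namely $\sup_{(\bv_h,\mu_h)}\bA_h((\bu_h,\lambda_h),(\bv_h,\mu_h))/\enorm{(\bv_h,\mu_h)}\gtrsim\enorm{(\bu_h,\lambda_h)}$, with a constant independent of $h$ and of the position of $\Gamma$; this is the textbook combination of coercivity of $A_h$ on $\bU_h$ with the inf-sup of $b$.

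The second step is consistency. Subtracting the discrete equations \eqref{discrete} from the consistency identities \eqref{consisdiscrete} (valid because the normal-extended exact solution satisfies $\nabla\bu^\ast\bn=0$ and $\bn\cdot\nabla\lambda=0$ on $\OGamma$) yields the Galerkin orthogonality $\bA_h((\bu^\ast-\bu_h,\lambda-\lambda_h),(\bv_h,\mu_h))=0$ for all $(\bv_h,\mu_h)\in\bU_h\times M_h$. Combining the inf-sup estimate, this orthogonality, and the continuity of $\bA_h$ (which follows from Lemma~\ref{lemcont}) in the usual C\'ea-type fashion gives the quasi-optimality bound $\|\bu^\ast-\bu_h\|_U+\|\lambda-\lambda_h\|_M\lesssim\inf_{\bv_h\in\bU_h}\|\bu^\ast-\bv_h\|_U+\inf_{\mu_h\in M_h}\|\lambda-\mu_h\|_M$. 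It then remains to bound the two best-approximation errors by inserting the nodal interpolants $I_k\bu^\ast\in\bU_h$ and $I_l\lambda\in M_h$.

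The final and most technical step is the interpolation analysis in the stabilized norms, which I would carry out element-by-element exactly as in \eqref{aux1}--\eqref{aux}. For the velocity, the $a$-part of $\|\bu^\ast-I_k\bu^\ast\|_U$ is controlled by the surface $H^1$-interpolation error: using the local trace inequality \eqref{fund1B}, standard polynomial interpolation bounds, and the fact that the normal-constant extension satisfies $\|\bu^\ast\|_{H^{k+1}(\OGamma)}^2\lesssim h\,\|\bu^\ast\|_{H^{k+1}(\Gamma)}^2$, this contributes $\lesssim h^k\|\bu^\ast\|_{H^{k+1}(\Gamma)}$, while the stabilization part $\rho^{1/2}\|\nabla(\bu^\ast-I_k\bu^\ast)\bn\|_{L^2(\OGamma)}$ is bounded by the full volumetric $H^1(\OGamma)$ error and produces the factor $(\rho h)^{1/2}$. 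For the multiplier, the $L^2(\Gamma)$-part gives $\lesssim h^{l+1}\|\lambda\|_{H^{l+1}(\Gamma)}$ and the stabilization part $\rho^{1/2}\|\bn\cdot\nabla(\lambda-I_l\lambda)\|_{L^2(\OGamma)}$ contributes $(\rho/h)^{1/2}$; together these reproduce the two terms of \eqref{discrbound}. The main obstacle is precisely this bookkeeping: tracking the $\rho$-dependence and the $h^{1/2}$ factors that appear when passing between the $O(h)$-wide volumetric strip $\OGamma$ and the surface $\Gamma$, and verifying that all interpolation estimates hold uniformly in the position of $\Gamma$ relative to the background mesh, so that no unfavorable powers of $h$ enter through cut elements. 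All the needed tools are the trace estimates and the norm equivalence of Lemma~\ref{lemcrucial}.
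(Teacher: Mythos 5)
Your proposal is correct and follows essentially the same route as the paper's own proof: Galerkin orthogonality from the consistency identity \eqref{consisdiscrete}, a Brezzi-type stability estimate for $\bA_h$ obtained from the ellipticity \eqref{elliptic} of $A_h$ on $\bU_h$ and the discrete inf-sup property of Corollary~\ref{corinfsup}, the resulting C\'ea-type bound \eqref{discrerror}, and finally nodal interpolation of the normally extended $(\bu^\ast)^e$ and $\lambda^e$, where the $h^{1/2}$ scaling between the $O(h)$-wide strip $\OGamma$ and $\Gamma$ yields precisely the factors $(\rho h)^{1/2}$ and $(\rho/h)^{1/2}$ in \eqref{discrbound}. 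No gaps; your bookkeeping of the $\rho$-dependence matches the paper's final chain of estimates.
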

\begin{proof}
Using the consistency property \eqref{consisdiscrete}, the continuity results derived in Lemma~\ref{lemcont},  ellipticity of $A_h(\cdot,\cdot)$ on $\bU_h$ and the discrete inf-sup property for $b(\cdot,\cdot)$ we obtain, for arbitrary $(\bw_h,\xi_h) \in \bU_h \times M_h$:
\begin{align*}
 & \big(\|\bu_h- \bw_h\|_U^2 +\|\lambda_h- \xi_h\|_M^2\big)^\frac12 \lesssim \sup_{(\bv_h,\mu_h)\in \bU_h \times M_h} \frac{\bA_h\big( (\bu_h- \bw_h,\lambda_h- \xi_h),(\bv_h,\mu_h)\big)}{(\|\bv_h\|_U^2+\|\mu_h\|_M^2)^\frac12} \\ & = \sup_{(\bv_h,\mu_h)\in \bU_h \times M_h} \frac{ \bA_h\big( (\bu^\ast- \bw_h,\lambda- \xi_h),(\bv_h,\mu_h)\big)}{(\|\bv_h\|_U^2+\|\mu_h\|_M^2)^\frac12} \lesssim \|\bu^\ast - \bw_h\|_U + \|\lambda - \xi_h\|_M.
\end{align*}
Hence, with a triangle inequality we get the Cea-type discretization error bound
\begin{equation} \label{discrerror}
\|\bu^\ast -\bu_h\|_U + \|\lambda - \lambda_h \|_M \lesssim
 \inf_{(\bv_h,\mu_h)\in \bU_h \times M_h}\big(\|\bu^\ast -\bv_h\|_U + \|\lambda - \mu_h \|_M\big).
\end{equation}
For  $(\bv_h,\mu_h)\in \bU_h \times M_h$ we take the interpolants $\bv_h=I_k\big((\bu^\ast)^e\big)$, $\mu_h=I_l(\lambda^e)$,   and assume sufficient smoothness of $\bu^\ast$ and hence of $\lambda$, cf. \eqref{charlambda}. Then, thanks to the interpolation properties of polynomials and their traces, cf., e.g., \cite{reusken2015analysis}, we have the estimates:
\begin{align*}
 & \|\bu^\ast -\bv_h\|_U + \|\lambda - \mu_h \|_M \\ & \lesssim \|\bu^\ast - \bv_h\|_1+ \rho^\frac12 \|\bu^\ast -\bv_h\|_{H^1(\OGamma)} + \|\lambda - \mu_h\|_{L^2(\Gamma)} + \rho^\frac12 \|\lambda - \mu_h\|_{H^1(\OGamma)} \\
  &\lesssim h^k \|\bu^\ast\|_{H^{k+1}(\Gamma)}+ \rho^\frac12 h^k \|\bu^\ast\|_{H^{k+1}(\OGamma)} +h^{l+1} \|\lambda\|_{H^{l+1}(\Gamma)} + \rho^\frac12 h^l\|\lambda\|_{H^{l+1}(\OGamma)}\\
   & \lesssim h^k\big(1 +(\rho h)^\frac12\big)\|\bu^\ast\|_{H^{k+1}(\Gamma)}+ h^{l+1}\big(1+ (\rho/h)^\frac12 \big)\|\lambda\|_{H^{l+1}(\Gamma)},
\end{align*}
which in combination with \eqref{discrerror} yields the desired result.
\end{proof}
\ \\[1ex]
\begin{corollary} \label{cor1} \rm Assume that the solution of \eqref{weak1a} is sufficiently smooth. We obtain  for $l=k \geq 1$  the optimal error bound:
\begin{equation} \label{dis1}
 \|\bu^\ast -\bu_h\|_U + \|\lambda - \lambda_h \|_M \lesssim h^k (\|\bu^\ast\|_{H^{k+1}(\Gamma)} +\|\lambda\|_{H^{k+1}(\Gamma)}).
\end{equation}
For $l=k-1 \geq 1$ and with $\rho \sim h$ we obtain the optimal error bound:
\begin{equation} \label{dis2}
 \|\bu^\ast -\bu_h\|_U + \|\lambda - \lambda_h \|_M \lesssim h^k (\|\bu^\ast\|_{H^{k+1}(\Gamma)} +\|\lambda\|_{H^{k}(\Gamma)}).
\end{equation}
If $\lambda=0$, cf. \eqref{charlambda}, the bound \eqref{dis2} holds for $l=k-1 \geq 1$ and for \emph{any} $\rho$ that fulfills Assumption~\ref{ass1}.   Using \eqref{charlambda}  we can bound the norm of $\lambda$ in terms of the normal component of the data $g_N$ and $\bu^\ast$:
\begin{equation} \label{replace}
  \|\lambda\|_{H^{m}(\Gamma)} \lesssim \|g_N\|_{H^{m}(\Gamma)}+ \|\bu^\ast\|_{H^{m+1}(\Gamma)}.
\end{equation}
Note that for the original problem \eqref{vectLaplace} the data $\mathbf{f}= \bg$ satisfy $g_N=0$.
\end{corollary}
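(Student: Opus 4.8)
The plan is to obtain all three discretization bounds by specializing the general estimate \eqref{discrbound} of Theorem~\ref{thm2} to the indicated choices of $l$ and $\rho$, and to control the two factors $\big(1+(\rho h)^\frac12\big)$ and $\big(1+(\rho/h)^\frac12\big)$ by means of the scaling restriction \eqref{scalingrho} (equivalently, Assumption~\ref{ass1}). The final estimate \eqref{replace} is independent of the discretization and should follow from the explicit characterization \eqref{charlambda} of the multiplier together with the smoothness of the shape operator $\bH$.

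First I would treat \eqref{dis1}. Setting $l=k$ in \eqref{discrbound}, the error is bounded by $h^k\big(1+(\rho h)^\frac12\big)\|\bu^\ast\|_{H^{k+1}(\Gamma)}+h^{k+1}\big(1+(\rho/h)^\frac12\big)\|\lambda\|_{H^{k+1}(\Gamma)}$. The upper bound $\rho\lesssim h^{-1}$ from \eqref{scalingrho} gives $\rho h\lesssim 1$, hence $\big(1+(\rho h)^\frac12\big)\lesssim 1$; moreover $h^{k+1}\big(1+(\rho/h)^\frac12\big)=h^k\big(h+(\rho h)^\frac12\big)\lesssim h^k$, since $h\le 1$ and $\rho h\lesssim 1$. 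Combining these two observations yields \eqref{dis1}.

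Next I would treat \eqref{dis2}. Setting $l=k-1$ and $\rho\sim h$ in \eqref{discrbound} gives the bound $h^k\big(1+(\rho h)^\frac12\big)\|\bu^\ast\|_{H^{k+1}(\Gamma)}+h^{k}\big(1+(\rho/h)^\frac12\big)\|\lambda\|_{H^{k}(\Gamma)}$. With $\rho\sim h$ one has $\rho h\sim h^2\lesssim 1$ and $\rho/h\sim 1$, so both bracketed factors are $\lesssim 1$ and \eqref{dis2} follows. For the special case $\lambda=0$ the term involving $\|\lambda\|_{H^{l+1}(\Gamma)}$ in \eqref{discrbound} vanishes identically, so only the requirement $\rho h\lesssim 1$ is needed in order that the surviving factor $\big(1+(\rho h)^\frac12\big)\lesssim 1$; since $\rho\lesssim h^{-1}$ holds for every $\rho$ admitted by Assumption~\ref{ass1}, the bound \eqref{dis2} (now with $\|\lambda\|_{H^{k}(\Gamma)}=0$) holds for any such $\rho$.

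Finally, for \eqref{replace} I would start from the characterization $\lambda=g_N-\tr\big(E_s(\bu_T^\ast)\bH\big)$ of \eqref{charlambda} and apply the triangle inequality in $H^m(\Gamma)$, so that $\|\lambda\|_{H^m(\Gamma)}\le \|g_N\|_{H^m(\Gamma)}+\|\tr(E_s(\bu_T^\ast)\bH)\|_{H^m(\Gamma)}$. Because $\bH$ is a smooth, bounded tensor field on the (sufficiently smooth) surface $\Gamma$, the product rule for Sobolev norms bounds the second term by $\|E_s(\bu_T^\ast)\|_{H^m(\Gamma)}$, up to a constant depending on $\Gamma$; and since $E_s$ is a first-order differential operator with smooth coefficients, $\|E_s(\bu_T^\ast)\|_{H^m(\Gamma)}\lesssim\|\bu_T^\ast\|_{H^{m+1}(\Gamma)}\lesssim\|\bu^\ast\|_{H^{m+1}(\Gamma)}$. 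The only mildly technical point in the whole proof is this last product estimate, in which one derivative is lost through $E_s$ and the curvature-dependent factor arising from $\bH$ must be absorbed into the hidden constant; every other step is a direct specialization of \eqref{discrbound}.
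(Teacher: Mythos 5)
Your proposal is correct and follows essentially the same route as the paper, which states the corollary without a separate proof precisely because it is the direct specialization of \eqref{discrbound} from Theorem~\ref{thm2} that you carry out: the factors $\big(1+(\rho h)^{\frac12}\big)$ and $\big(1+(\rho/h)^{\frac12}\big)$ are controlled via Assumption~\ref{ass1} (and $\rho\sim h$ where needed), the $\lambda$-term drops out when $\lambda=0$, and \eqref{replace} follows from \eqref{charlambda} together with the smoothness of $\bH$ and the fact that $E_s$ is a first-order operator with smooth coefficients. All steps check out, including the algebraic identity $h^{l+1}(\rho/h)^{\frac12}=h^{l}(\rho h)^{\frac12}$ that makes the $l=k$ case work under $\rho h\lesssim 1$.
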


\section{$L^2$-error bound} \label{s_L2}
In this section we use a standard duality argument to derive an optimal $L^2$-norm discretization error bound, based on a  regularity assumption for the problem \eqref{vectLaplace}. We note that in the analysis we need the assumption $\rho \sim h$.

\begin{theorem} \label{thmL2}
Assume that \eqref{vectLaplace} satisfies the regularity estimate
\begin{equation} \label{regu}
 \|\bu_T\|_{H^2(\Gamma)} \lesssim \|\bbf\|_{L^2(\Gamma)} \quad \text{for all $\bbf \in L^2(\Gamma)^3$ with $\bbf\cdot\bn = 0$}.
\end{equation}
Take $\rho \sim h$, $l=k \geq 1$ or $l=k-1 \geq 1$. The following error estimate holds:
\begin{equation} \label{L2error}
 \|\bu^\ast - \bP \bu_h\|_{L^2(\Gamma)} \lesssim h^{k+1}\big( \|\bu^\ast \|_{H^{k+1}(\Gamma)} + \|\lambda\|_{H^{l+1}(\Gamma)}\big).
\end{equation}
\end{theorem}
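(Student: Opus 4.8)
The plan is to use a duality (Aubin--Nitsche) argument. Write $\mathbf{e}:=\bu^\ast-\bu_h$ and note that, since $\bu^\ast$ is tangential, the quantity to be estimated is the tangential part $\bP\mathbf{e}=\bu^\ast-\bP\bu_h=:\mathbf{e}_T$. I would first introduce the adjoint problem: find $(\bw,\sigma)\in\bV_\ast\times L^2(\Gamma)$ with $a(\bv,\bw)+(\bv\cdot\bn,\sigma)_{L^2(\Gamma)}=(\mathbf{e}_T,\bv)_{L^2(\Gamma)}$ for all $\bv\in\bV_\ast$ and $(\bw\cdot\bn,\mu)_{L^2(\Gamma)}=0$ for all $\mu$. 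Because $a(\cdot,\cdot)$ is symmetric, this is again the saddle point problem \eqref{weak1a} with right-hand side $\mathbf{e}_T$, so the well-posedness theorem applies: $\bw$ is tangential, $\bw=\bw_T$ solves \eqref{vectLaplace} with $\bbf=\mathbf{e}_T$, and by \eqref{charlambda} (with $g_N=0$) the multiplier is $\sigma=-\tr(E_s(\bw_T)\bH)$. The regularity assumption \eqref{regu} then gives $\|\bw\|_{H^2(\Gamma)}\lesssim\|\mathbf{e}_T\|_{L^2(\Gamma)}$, and via \eqref{replace} also $\|\sigma\|_{H^1(\Gamma)}\lesssim\|\bw\|_{H^2(\Gamma)}\lesssim\|\mathbf{e}_T\|_{L^2(\Gamma)}$.

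The second step is to extend $(\bw,\sigma)$ constantly along normals to $\OGamma$ and to convert the continuous bilinear forms into the stabilized discrete ones at no cost. Since $\nabla\bw\,\bn=0$, $\bn\cdot\nabla\sigma=0$ and $w_N=0$ on $\OGamma$ for the constant extension, the stabilization terms vanish and one obtains $A_h(\mathbf{e},\bw)=a(\mathbf{e},\bw)$, $b(\mathbf{e},\sigma)=(\mathbf{e}\cdot\bn,\sigma)_{L^2(\Gamma)}$ and $b(\bw,\mu)=0$ for every $\mu\in M$. Using $(\mathbf{e}_T,\mathbf{e}_T)=(\mathbf{e}_T,\mathbf{e})$ (the normal part drops out) together with the adjoint equation tested with $\bv=\mathbf{e}$ yields $\|\mathbf{e}_T\|_{L^2(\Gamma)}^2=A_h(\mathbf{e},\bw)+b(\mathbf{e},\sigma)$. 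Now I would insert interpolants $\bw_h=I_k(\bw^e)\in\bU_h$, $\sigma_h=I_l(\sigma^e)\in M_h$ and use the Galerkin orthogonality relations obtained by subtracting \eqref{discrete} from the consistency identity \eqref{consisdiscrete}, namely $A_h(\mathbf{e},\bv_h)+b(\bv_h,\lambda-\lambda_h)=0$ and $b(\mathbf{e},\mu_h)=0$. Combined with $b(\bw,\lambda-\lambda_h)=0$ this collapses the expression to the clean identity
\[
\|\mathbf{e}_T\|_{L^2(\Gamma)}^2=A_h(\mathbf{e},\bw-\bw_h)+b(\mathbf{e},\sigma-\sigma_h)-b(\bw_h-\bw,\lambda-\lambda_h).
\]

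Finally I would bound the three terms by the continuity estimates \eqref{cont1}--\eqref{cont2}, giving $\|\mathbf{e}_T\|_{L^2(\Gamma)}^2\lesssim\|\mathbf{e}\|_U(\|\bw-\bw_h\|_U+\|\sigma-\sigma_h\|_M)+\|\bw-\bw_h\|_U\|\lambda-\lambda_h\|_M$. Each adjoint interpolation error must then be shown to be $O(h)$ times $\|\mathbf{e}_T\|_{L^2(\Gamma)}$: for $\bw\in H^2(\Gamma)$ the bound $\|\bw-\bw_h\|_U\lesssim h\|\bw\|_{H^2(\Gamma)}$ follows as in the proof of Theorem~\ref{thm2} once $\rho\sim h$ controls the $\rho^{1/2}$-weighted term. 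Inserting the energy estimate $\|\mathbf{e}\|_U+\|\lambda-\lambda_h\|_M\lesssim h^k(\|\bu^\ast\|_{H^{k+1}(\Gamma)}+\|\lambda\|_{H^{l+1}(\Gamma)})$ and dividing by $\|\mathbf{e}_T\|_{L^2(\Gamma)}$ produces the claimed order $h^{k+1}$. The main obstacle I anticipate is the multiplier interpolation estimate $\|\sigma-\sigma_h\|_M\lesssim h\|\sigma\|_{H^1(\Gamma)}$: since $\sigma$ is only $H^1$, the $L^2(\Gamma)$ part is routine, but the stabilization part $\rho^{1/2}\|\bn\cdot\nabla(\sigma-\sigma_h)\|_{L^2(\OGamma)}$ appears to lose a power of $h$. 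The resolution is that $\bn\cdot\nabla\sigma^e=0$, so this term equals $\rho^{1/2}\|\bn\cdot\nabla(\sigma_h-\sigma^e)\|_{L^2(\OGamma)}$, and the constant extension over the $O(h)$-thick band satisfies $|\sigma^e|_{H^1(\OGamma)}\sim h^{1/2}|\sigma|_{H^1(\Gamma)}$; with $\rho\sim h$ this recovers the full order $h$. This thin-band scaling, which is precisely why the hypothesis $\rho\sim h$ is required, is the delicate point of the argument.
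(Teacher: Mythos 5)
Your proposal is correct and follows essentially the same duality argument as the paper's proof: the same adjoint saddle point problem with data $\bP(\bu^\ast-\bu_h)$, the same $H^2(\Gamma)^3\times H^1(\Gamma)$ regularity via \eqref{charlambda}, the same use of Galerkin orthogonality, and the same interpolation bounds in which the thin-band scaling of the constant normal extension, $\|\sigma^e\|_{H^1(\OGamma)}\lesssim h^{1/2}\|\sigma\|_{H^1(\Gamma)}$, combined with $\rho\sim h$ rescues the multiplier term. Your expanded identity for $\|\mathbf{e}_T\|_{L^2(\Gamma)}^2$ is simply the componentwise form of the paper's compact step $\bA_h\big((\bu^\ast-\bu_h,\lambda-\lambda_h),(\bw^\ast-\bw_h,\tau-\tau_h)\big)$, and you correctly identify the multiplier interpolation estimate as the one place where the hypothesis $\rho\sim h$ is genuinely needed.
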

\begin{proof}
 We consider the problem \eqref{vectLaplace} with $\bbf_e:=\bP(\bu^\ast-\bu_h)= \bu^\ast- \bP \bu_h$. We take $\bg = \bbf_e$ in \eqref{weak1a}, hence $g_N=0$, and the corresponding solution of  \eqref{weak1a} is denoted by $(\bw^\ast,\tau) \in \bV_\ast \times L^2(\Gamma)$. The extensions $(\bw^\ast)^e$, $\tau^e$ are also denoted by $\bw^\ast$ and $\tau$.  From the regularity assumption and $\tau =- \tr(E_s(\bw^\ast)\bH)$ it follows that $(\bw^\ast,\tau) \in H^2(\Gamma)^3 \times H^1(\Gamma)$ and that
 \begin{equation} \label{regu2}
  \|\bw^\ast\|_{H^2(\Gamma)} +\|\tau \|_{H^1(\Gamma)} \lesssim \|\bbf_e\|_{L^2(\Gamma)}
 \end{equation}
holds. With $\bA_h(\cdot,\cdot)$ as in \eqref{defk}   we get the consistency result
\[
\bA_h\big((\bw^\ast,\tau),(\bv,\mu)\big)= (\bbf_e,\bv)_{L^2(\Gamma)} \quad \text{for all}~(\bv,\mu)\in \bU \times M.
\]
We take $(\bv,\mu)=(\bu^\ast-\bu_h,\lambda - \lambda_h) \in \bU \times M$ and using the symmetry of $\bA_h(\cdot,\cdot)$ and the Galerkin orthogonality we obtain
\begin{align*}
 \|\bu^\ast-\bP\bu_h\|_{L^2(\Gamma)}^2 & = \|\bP(\bu^\ast-\bu_h)\|_{L^2(\Gamma)}^2 = (\bP(\bu^\ast-\bu_h),\bu^\ast-\bu_h)_{L^2(\Gamma)} \\ & = (\bg,\bu^\ast-\bu_h)_{L^2(\Gamma)}=    \bA_h\big((\bw^\ast,\tau),(\bu^\ast-\bu_h,\lambda - \lambda_h) \big) \\
 & = k\big((\bu^\ast-\bu_h,\lambda - \lambda_h),(\bw^\ast-\bw_h,\tau - \tau_h)  \big)
\end{align*}
for all $(\bw_h,\tau_h) \in U_h \times M_h$.  We use continuity of $\bA_h(\cdot,\cdot)$ and the results derived in Corollary~\ref{cor1} and thus obtain
\begin{equation}\label{et1}
\|\bu^\ast-\bP\bu_h\|_{L^2(\Gamma)}^2 \lesssim h^k \big(\|\bu^\ast\|_{H^{k+1}(\Gamma)} +\|\lambda\|_{H^{l+1}(\Gamma)}\big)\big( \|\bw^\ast - \bw_h\|_U +\|\tau - \tau_h\|_M\big).
\end{equation}
We take $\bw_h= I_1 (\bw^\ast)$, $\tau_h = I_1 (\tau)$. Using \eqref{regu2} this yields
\begin{align*}
 \|\bw^\ast - \bw_h\|_U & \lesssim \|\bw^\ast - I_1(\bw^\ast)\|_{H^1(\Gamma)} + \rho^\frac12 \|\bw^\ast - I_1(\bw^\ast) \|_{H^1(\OGamma)} \\
  &\lesssim h \|\bw^\ast\|_{H^2(\Gamma)} +  \rho^\frac12 h \|\bw^\ast\|_{H^2(\OGamma)} \\
  & \lesssim h (1 +(\rho h)^\frac12)\|\bw^\ast\|_{H^2(\Gamma)} \lesssim h \|\bbf_e\|_{L^2(\Gamma)} \lesssim h \|\bu^\ast - \bP \bu_h\|_{L^2(\Gamma)}
\end{align*}
and
\begin{align*}
 \|\tau - \tau_h\|_M & \lesssim \|\tau - I_1 (\tau)\|_{L^2(\Gamma)} + \rho^\frac12 \|\tau - I_1 (\tau)\|_{H^1(\OGamma)} \\
  & \lesssim h \|\tau\|_{H^1(\Gamma)} + \rho^\frac12\|\tau\|_{H^1(\OGamma)} \\
   &\lesssim h (1 + (\rho/h)^\frac12)\|\tau\|_{H^1(\Gamma)} \lesssim  h \|\bbf_e\|_{L^2(\Gamma)} \lesssim h \|\bu^\ast - \bP \bu_h\|_{L^2(\Gamma)} ,
\end{align*}
where in the second last inequality we used $\rho \sim h$. Combining these estimates with the result in \eqref{et1} completes the proof.
\end{proof}
\ \\[1ex]
Note that the term $ \|\lambda\|_{H^{l+1}(\Gamma)}$ in \eqref{L2error} can be replaced by $ \|\bu^\ast\|_{H^{l+2}(\Gamma)}$, cf.~\eqref{replace}.
From the proof above it follows that we do not need the assumption $\rho \sim h$ for the special case $\lambda=0$.

We address the question how  accurate the discrete solution $\bu_h$ satisfies the tangential condition $\bu\cdot\bn=0$ on $\Gamma$. Due to the zero order term $\int_\Gamma\bu\cdot\bv\,ds$ in the definition of the bilinear form $a(\cdot,\cdot)$ in \eqref{defaalt}, the norm $\|\cdot\|_U$ in \eqref{discrbound} gives control of the normal components, and hence we get
\[
\|\bu_h\cdot\bn\|_{L^2(\Gamma)}\le\|\bu_h-\bu^\ast\|_{U}.
\]
Therefore, under the assumptions of the Corollary~\ref{cor1} we obtain the estimate
\begin{align} \label{eq:normalError}
\|\bu_h\cdot\bn\|_{L^2(\Gamma)}&\lesssim h^k.
\end{align}
Another bound on $\bu_h\cdot\bn$ can be derived from the second equation in \eqref{discrete}.
Denote by $P_l$ the  orthogonal projection onto $M_h$ with respect to the $(\cdot,\cdot)_M$ scalar product, then the second equation in  \eqref{discrete} implies $P_l(\bu_h\cdot\bn)=0$. Therefore, we have
\[
\|\bu_h\cdot\bn\|_{L^2(\Gamma)}\le\|\bu_h\cdot\bn\|_{M}=\|(I-P_l)\bu_h\cdot\bn\|_{M}=\inf_{\mu_h\in M_h}\|\bu_h\cdot\bn-\mu_h\|_{M}.
\]

\section{Condition number estimate} \label{s_cond}
It is well-known \cite{OlshanskiiReusken08,burmanembedded} that for unfitted finite element methods there is an issue concerning algebraic stability, in the sense that the matrices that represent the discrete problem can have very bad conditioning due to small cuts in the geometry.
Stabilization methods have been developed which remedy this stability problem, see, e.g., \cite{burmanembedded,olshanskii2016trace}. In this section we show that the `volume normal derivative' stabilizations that we use in both bilinear forms $A_h(\cdot,\cdot)$ and $b(\cdot,\cdot)$, with scaling as in \eqref{scalingrho}, remove any possible algebraic instability. More precisely,  we show that the condition number of the stiffness matrix corresponding to the saddle point problem \eqref{discrete} is bounded by $ch^{-2}$, where the constant $c$ is independent of the position of the interface. Furthermore, we present an optimal Schur complement preconditioner.

Let integer $n>0, m>0$ be the number of active degrees of freedom in $\bU_h$ and $M_h$ spaces, i.e., $n= {\rm dim}(\bU_h)$, $m={\rm dim}(M_h)$, and $P_h^U:\,\mathbb{R}^n\to \bU_h$ and $P_h^M:\,\mathbb{R}^m\to M_h$ are canonical mappings between the vectors of nodal values and finite element functions.
Denote by $\la\cdot,\cdot\ra$ and $\|\cdot\|$ the Euclidean scalar product and the norm. For matrices, $\|\cdot\|$ denotes the spectral norm. Now we introduce several matrices. Let
$A, M_u\in\mathbb{R}^{n\times n}$, $B\in\mathbb{R}^{n\times m}$, $M_\lambda, S_M \in \mathbb{R}^{m \times m}$ be such that
\[
\begin{split}
\la A \vec u, \vec v\ra &=  A_h(P_h^U \vec u, P_h^U \vec v),\quad \la B \vec u,\vec \lambda \ra= b(P_h^U \vec u,P_h^M \vec\lambda),\quad
\\
\la M_u \vec u, \vec v\ra&=  (P_h^U \vec u, P_h^U \vec v)_{L^2(\OGamma)},\quad \la M_\lambda\vec \lambda,\vec \mu \ra= (P_h^M \vec \lambda,P_h^M \vec \mu)_{L^2(\OGamma)},\quad \\
\la S_M \vec \lambda , \vec \mu \ra &= (P_h^M \vec \lambda,P_h^M \vec \mu)_{L^2(\Gamma)}+ \rho \big( \nabla (P_h^M \vec \lambda)\bn, \nabla(P_h^M \vec \mu)\bn\big)_{L^2(\OGamma)}
\end{split}
\]
for all $\vec u,\vec v\in\mathbb{R}^n,~~\vec \mu,\,\vec \lambda\in\mathbb{R}^m$. Note that the numerical properties of mass matrices $M_u$ and $M_\lambda$ do not depend on how the surface $\Gamma$  intersects the domain $\OGamma$. Since the family of background meshes is shape regular, these mass matrices have a spectral condition number that is uniformly bounded,  independent of  $h$ and  of how $\Gamma$ intersects the background triangulation $\mathcal{T}_h$. Furthermore, for  the symmetric positive definite matrix $S_M$ we have
\[ \la S_M \vec \lambda, \vec \lambda \ra =\|P_h^M \vec \lambda\|_M^2 \quad \text{for all}~\vec \lambda\in\mathbb{R}^m,
\]
cf.~\eqref{defmuast}.
We also introduce the system matrix and its Schur complement:
\[
\A:=\left[\begin{matrix}
             A & B^T  \\
             B & 0
           \end{matrix}\right],\quad S=B A^{-1} B^T.
%
\]
The algebraic system resulting from the finite element method \eqref{discrete} has the form
\begin{equation}\label{SLAE}
\A \vec x=\vec b,\quad\text{with some}~\vec x, \vec b\in\mathbb{R}^{n+m}.
\end{equation}
We will consider a block-diagonal preconditioner of the matrix $\A$. For this we first analyze preconditioners of the matrices $A$ and $S$. In the following lemma we use spectral inequalities for symmetric matrices.
\begin{lemma} \label{lemprecond} There are strictly positive constants $\nu_{A,1}$, $\nu_{A,2}$, $\nu_{S,1}$, $\nu_{S,2}$, $\tilde \nu_{S,1}$, $\tilde \nu_{S,2}$, independent of $h$ and of how $\Gamma$ intersects $\mathcal{T}_h$ such that the following spectral inequalities hold:
 \begin{align}
  \nu_{A,1} h^{-1} M_u & \leq A \leq \nu_{A,2} h^{-3} M_u, \label{spec1} \\
  \nu_{S,1} h^{-1} M_\lambda & \leq S \leq \nu_{S,2} h^{-3} M_\lambda, \label{spec2}\\
  \tilde \nu_{S,1}  S_M & \leq S \leq \tilde \nu_{S,2} S_M.  \label{spec3}
 \end{align}
\end{lemma}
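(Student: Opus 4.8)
The three pairs of spectral inequalities correspond directly to the three norm-equivalence results already established, so the plan is to translate each functional-analytic bound into its matrix counterpart through the canonical mappings $P_h^U$ and $P_h^M$. For the lower bound in \eqref{spec1}, I would use the coercivity estimate \eqref{elliptic}, which gives $A_h(\bu_h,\bu_h) \gtrsim h^{-1}\|\bu_h\|_{L^2(\OGamma)}^2$; writing $\bu_h = P_h^U \vec u$ and recalling $\la M_u \vec u,\vec u\ra = \|P_h^U\vec u\|_{L^2(\OGamma)}^2$ yields $\la A\vec u,\vec u\ra \gtrsim h^{-1}\la M_u \vec u,\vec u\ra$, which is the desired spectral inequality with $\nu_{A,1}$. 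For the upper bound, I would estimate $A_h(\bu_h,\bu_h) = a(\bu_h,\bu_h) + \rho\|\nabla\bu_h\bn\|_{L^2(\OGamma)}^2$ termwise, bounding the $L^2(\Gamma)$-contributions in $a$ by $\|\bu_h\|_{L^2(\Gamma)}^2 \lesssim h^{-1}\|\bu_h\|_{L^2(\OGamma)}^2$ via \eqref{fund1}, the strain and stabilization gradient terms by a finite-element inverse inequality $\|\nabla\bu_h\|_{L^2(\OGamma)}\lesssim h^{-1}\|\bu_h\|_{L^2(\OGamma)}$ together with $\rho\lesssim h^{-1}$ from \eqref{scalingrho}. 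The worst scaling is then $h^{-2}\cdot h^{-1}=h^{-3}$, giving $\la A\vec u,\vec u\ra \lesssim h^{-3}\la M_u\vec u,\vec u\ra$ and hence $\nu_{A,2}$.

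For \eqref{spec2} I would proceed through the norm $\|\cdot\|_M$. The upper bound in \eqref{spec3} is the cheaper half: since $\la S_M\vec\lambda,\vec\lambda\ra = \|P_h^M\vec\lambda\|_M^2$, the inequality $S \le \tilde\nu_{S,2}S_M$ is equivalent to $\la S\vec\lambda,\vec\lambda\ra \lesssim \|P_h^M\vec\lambda\|_M^2$, which follows from the definition $\la S\vec\lambda,\vec\lambda\ra = \la BA^{-1}B^T\vec\lambda,\vec\lambda\ra = \sup_{\vec v}\la B^T\vec\lambda,\vec v\ra^2/\la A\vec v,\vec v\ra$ combined with the continuity bound \eqref{cont2} of $b(\cdot,\cdot)$, namely $b(\bv_h,\mu_h)\le\|\bv_h\|_U\|\mu_h\|_M$. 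The lower bound $S \ge \tilde\nu_{S,1}S_M$ is exactly the algebraic restatement of the discrete inf-sup property \eqref{infsupestA} from Corollary~\ref{corolstab}: the Schur complement quadratic form equals $\sup_{\vec v}\la B^T\vec\lambda,\vec v\ra^2/\la A\vec v,\vec v\ra = \sup_{\bv_h\in\bU_h}b(\bv_h,\mu_h)^2/\|\bv_h\|_U^2 \ge d^2\|\mu_h\|_M^2 = d^2\la S_M\vec\lambda,\vec\lambda\ra$. Thus \eqref{spec3} is essentially a repackaging of the continuity and stability of $b$ already in hand.

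Finally, \eqref{spec2} follows from \eqref{spec3} by comparing $S_M$ with the multiplier mass matrix $M_\lambda$. Here I would invoke the scalar norm equivalence \eqref{fund1} once more, applied to $\mu_h = P_h^M\vec\lambda \in V_h^l$: it gives $\|\mu_h\|_M^2 = \|\mu_h\|_{L^2(\Gamma)}^2 + \rho\|\bn\cdot\nabla\mu_h\|_{L^2(\OGamma)}^2 \sim (\|\mu_h\|_{L^2(\Gamma)}^2 + h\|\bn\cdot\nabla\mu_h\|_{L^2(\OGamma)}^2) \sim h^{-1}\|\mu_h\|_{L^2(\OGamma)}^2$ on the lower side (using $\rho\gtrsim h$, cf.~\eqref{lower2}), while the upper side $\|\mu_h\|_M^2 \lesssim h^{-3}\|\mu_h\|_{L^2(\OGamma)}^2$ uses $\rho\lesssim h^{-1}$ together with the inverse inequality bounding $\|\bn\cdot\nabla\mu_h\|_{L^2(\OGamma)}^2\lesssim h^{-2}\|\mu_h\|_{L^2(\OGamma)}^2$. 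Since $\la M_\lambda\vec\lambda,\vec\lambda\ra = \|\mu_h\|_{L^2(\OGamma)}^2$, this produces $h^{-1}M_\lambda \lesssim S_M \lesssim h^{-3}M_\lambda$, and chaining with \eqref{spec3} delivers \eqref{spec2}. The one step requiring genuine care is the upper bound in \eqref{spec1}: one must check that all the constituent terms of $A_h$ scale no worse than $h^{-3}$ and that the stabilization parameter $\rho$ never pushes the estimate beyond this, so the interplay between $\rho\lesssim h^{-1}$ and the inverse inequality is the place where the $h^{-3}$ exponent is genuinely saturated and must be tracked exactly.
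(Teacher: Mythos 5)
Your proposal is correct and follows essentially the same route as the paper's proof: coercivity \eqref{elliptic} plus inverse/trace estimates for \eqref{spec1}, the variational identity $\la S\vec\lambda,\vec\lambda\ra=\sup_{\bu_h}b(\bu_h,\mu_h)^2/\|\bu_h\|_U^2$ combined with continuity \eqref{cont2} and the discrete inf-sup property for \eqref{spec3}, and the equivalence $h^{-1}M_\lambda\lesssim S_M\lesssim h^{-3}M_\lambda$ via \eqref{fund1} to deduce \eqref{spec2}. The only compressed step is the upper bound in \eqref{spec1}, where bounding the surface strain term $\|E_s(\bu_h)\|_{L^2(\Gamma)}^2$ requires the element-wise trace estimate (the local variant of \eqref{fund1B} applied to $\nabla\bu_h$, giving $\|\bv_h\|_1^2\lesssim h^{-1}\|\bv_h\|_{H^1(\OGamma)}^2$) before the inverse inequality, which your ``worst scaling $h^{-2}\cdot h^{-1}$'' accounting implicitly contains.
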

\begin{proof}
Note that
\begin{equation}\label{cond1}
\frac{\la A\vec v,\vec v\ra}{\la M_u \vec v, \vec v\ra}=
\frac{A_h(P_h^U \vec v, P_h^U \vec v)}{\|P_h^U \vec v\|^2_{L^2(\OGamma)}}\quad\text{for all}~ \vec v\in\mathbb{R}^n.
  \end{equation}
From \eqref{elliptic} we get
\[ \nu_{A,1}h^{-1}\le\frac{A_h(\bv_h, \bv_h)}{\|\bv_h\|^2_{L^2(\OGamma)}} \quad \text{for all}~\bv_h \in U_h. 
\]
Using the local variant of \eqref{fund1B} and a FE inverse inequality we get
\[
  \|\bv_h\|_1^2 \lesssim h^{-1}\|\bv_h\|_{H^1(\OGamma)}^2 + h \sum_{T \in \T_h^\Gamma} |\bv_h|_{H^2(T)}^2 \lesssim h^{-1}\|\bv_h\|_{H^1(\OGamma)}^2 \quad\text{for all}~ \bv_{h} \in \bU_h,
\]
and thus,
\[
 \frac{A_h(\bv_h, \bv_h)}{\|\bv_h\|^2_{L^2(\OGamma)}} \leq c  \frac{\|\bv_h\|_1^2 +\rho \|\bv_h\|_{H^1(\OGamma)}^2}{\|\bv_h\|^2_{L^2(\OGamma)}} \leq c \frac{h^{-1} \|\bv_h\|_{H^1(\OGamma)}^2}{\|\bv_h\|^2_{L^2(\OGamma)}} \leq  \nu_{A,2}h^{-3}~~\text{for all}~\bv_h \in U_h,
\]
with a suitable constant $\nu_{A,2}$.
Combination of these results yields the inequalities in \eqref{spec1}.
For the Schur complement matrix ${S}=B A^{-1} B^T$ we have
\begin{equation}\label{cond3}
\begin{split}
\frac{\la S \vec \lambda,\vec \lambda\ra}{\la S_M\vec \lambda,\vec \lambda\ra}&
=\sup_{\vec u\in\mathbb{R}^n}\frac{\la  \vec u,  A^{-\frac12}B^T \vec\lambda\ra^2}{\|\vec u\|^2\|P_h^M \vec \lambda\|_M^2}
=\sup_{\vec u\in\mathbb{R}^n}\frac{\la B \vec u, \vec \lambda\ra^2}{\| A^\frac12 \vec u\|^2\|P_h^M \vec \lambda\|_\ast^2}\\
&=
\sup_{\bu_h\in U_h}\frac{b(\bu_h,\mu_h )^2}{\|\bu_h\|_U^2\|\mu_h\|_M^2},\quad \mu_h:=P_h^M \vec \lambda.
\end{split}
  \end{equation}
  Using the results in  \eqref{cont2} and Corollary~\ref{corinfsup}  we obtain the result in \eqref{spec3}. We also have
\begin{equation}\label{cond4}
 \frac{\la S_M \vec \lambda,\vec \lambda\ra}{\la M_\lambda\vec \lambda,\vec \lambda\ra} = \frac{\|\mu_h\|_M^2}{\|\mu_h\|_{L^2(\OGamma)}^2}, \quad       \mu_h:=P_h^M \vec \lambda.
\end{equation}
Using $ h \lesssim \rho \lesssim h^{-1}$ and \eqref{fund1} we get
\[
  h^{-1} \|\mu_h\|_{L^2(\OGamma)}^2 \lesssim  \|\mu_h\|_M^2 \lesssim h^{-3} \|\mu_h\|_{L^2(\OGamma)}^2 \quad \text{for all}~\mu_h \in M_h.
\]
Using this we see that the result in  \eqref{spec2} follows from \eqref{spec3}.
\end{proof}
\ \\[1ex]
We introduce a block diagonal preconditioner
\[
  Q:=\left[\begin{matrix}
             Q_A & 0  \\
             0 & Q_S
           \end{matrix}\right]
\]
of $\A$.
\begin{corollary}\label{corr:pc}
The following estimate holds with some $c>0$ independent of how $\Gamma$ cuts through the background mesh,
\begin{equation}\label{Condest}
\mathrm{cond}(\A)=\|\A\| \|\A^{-1}\|\le c\,h^{-2}.
\end{equation}
\end{corollary}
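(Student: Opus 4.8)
The plan is to bound $\|\A\|$ and $\|\A^{-1}\|$ separately, since the spectral condition number is their product. For the norm of $\A$ I would use the block structure together with the spectral inequalities of Lemma~\ref{lemprecond}. The diagonal block $A$ satisfies $A \le \nu_{A,2} h^{-3} M_u$ by \eqref{spec1}, and since the mass matrix $M_u$ has a uniformly bounded spectral norm (being generated on a shape-regular background mesh, with the usual scaling one has $\|M_u\|\lesssim h^{3}$ for $3$D nodal mass matrices, so $\|A\|\lesssim 1$; more conservatively $\|A\|\lesssim h^{-3}\|M_u\|$). The off-diagonal block $B$ is controlled by the continuity estimate \eqref{cont2}: $\la B\vec u,\vec\lambda\ra = b(P_h^U\vec u, P_h^M\vec\lambda)\le \|P_h^U\vec u\|_U\|P_h^M\vec\lambda\|_M$, so $\|B\|$ is bounded in terms of $\|A\|^{1/2}$ and $\|S_M\|^{1/2}$. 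Assembling these gives $\|\A\|\lesssim h^{-2}$, or whatever the dominant block scaling turns out to be after inserting the mass-matrix bounds.

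For $\|\A^{-1}\|$ the natural route is the standard saddle-point estimate: since $A$ is symmetric positive definite on $\bU_h$ and $b$ satisfies the discrete inf-sup condition (Corollary~\ref{corinfsup}), $\A$ is invertible and its inverse is bounded in terms of the smallest eigenvalue of $A$ and the inf-sup constant. The smallest eigenvalue of $A$ is bounded below by $\nu_{A,1}h^{-1}\lambda_{\min}(M_u)$ from \eqref{spec1}; with $\lambda_{\min}(M_u)\gtrsim h^3$ this gives $\lambda_{\min}(A)\gtrsim h^{2}$. The inf-sup constant for the matrix pair, via \eqref{spec2} which shows $S\gtrsim h^{-1}M_\lambda$, together with $\lambda_{\min}(M_\lambda)\gtrsim h^3$, gives a lower bound on the smallest singular value of the Schur complement. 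Plugging these into the Brezzi-type bound $\|\A^{-1}\|\lesssim \lambda_{\min}(A)^{-1}+\text{(Schur terms)}\lesssim h^{-2}$ yields the claim.

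The main obstacle I anticipate is bookkeeping the powers of $h$ coming from the mass matrices consistently. The inequalities \eqref{spec1}--\eqref{spec3} relate $A$ and $S$ to $M_u$, $M_\lambda$, $S_M$ up to factors $h^{-1}$ and $h^{-3}$, but to convert these into bounds on $\|\A\|$ and $\|\A^{-1}\|$ one must also know the spectral behaviour of $M_u$ and $M_\lambda$ themselves. The paper already records that these mass matrices have uniformly bounded \emph{condition number}, but the absolute scaling $\lambda_{\min}(M_u)\sim\lambda_{\max}(M_u)\sim h^{d}$ (here $d=3$ for the volume $L^2(\OGamma)$ inner product on a quasi-uniform mesh) is what actually drives the $h^{-2}$ factor and must be tracked carefully. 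A secondary subtlety is that the Schur complement lower bound requires both the inf-sup property and the coercivity of $A$ on $\bU_h$ simultaneously; the cleanest way to package this is to invoke the abstract result that for a symmetric saddle point problem with SPD $(1,1)$-block and inf-sup-stable $b$, one has $\|\A^{-1}\|$ bounded by a combination of $\lambda_{\min}(A)^{-1}$ and $\lambda_{\min}(S)^{-1}$, so that the whole estimate reduces to inserting \eqref{spec1} and \eqref{spec2}. Once the $h$-powers are reconciled, the bound $\mathrm{cond}(\A)\lesssim h^{-2}$ follows directly, with the position-independence of the constant inherited from the uniformity statements in Lemma~\ref{lemprecond} and Corollary~\ref{corinfsup}.
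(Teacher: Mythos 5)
Your proposal is correct in substance and rests on the same two ingredients as the paper's proof --- the spectral inequalities of Lemma~\ref{lemprecond} and an eigenvalue-localization result for symmetric saddle point matrices --- but it packages them differently. The paper applies Lemma~5.14 of \cite{OlshTyrt} (a Rusten--Winther-type interval bound, which is exactly the ``combination of $\lambda_{\min}(A)^{-1}$ and $\lambda_{\min}(S)^{-1}$'' you invoke, in quadratic-formula form) to the \emph{preconditioned} matrix $Q^{-1}\A$ with $Q=\operatorname{diag}(M_u,M_\lambda)$: from \eqref{spec1}--\eqref{spec2} it gets that all eigenvalues of $Q^{-1}\A$ have magnitude in $[c_1 h^{-1}, c_2 h^{-3}]$, cf.~\eqref{EigBound}, and then uses only the uniform boundedness of $\mathrm{cond}(Q)$ to transfer the ratio $h^{-2}$ to $\A$ itself. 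Your route works directly on $\A$ and therefore needs the absolute scaling $\lambda_{\min}(M_u)\sim\lambda_{\max}(M_u)\sim h^{3}$ (likewise for $M_\lambda$); this holds under the paper's quasi-uniformity assumption on $\T_h^\Gamma$, so your argument is valid, but it is an extra piece of bookkeeping that the paper's relative (preconditioned) formulation quietly avoids --- that insensitivity to the overall mass-matrix scaling is precisely what the paper's version buys. Two local repairs to your write-up are needed. First, your own estimates give $\|\A\|\lesssim 1$, not $h^{-2}$: indeed $\|A\|\le \nu_{A,2}h^{-3}\|M_u\|\lesssim 1$ and $\|B\|\le\|A\|^{1/2}\|S_M\|^{1/2}\lesssim 1$, using \eqref{cont2} and $S_M\lesssim h^{-3}M_\lambda$ (which follows from \eqref{fund1} and $\rho\lesssim h^{-1}$); you must commit to $\|\A\|\lesssim 1$, since the hedged ``$\|\A\|\lesssim h^{-2}$'' combined with $\|\A^{-1}\|\lesssim h^{-2}$ would only yield $\mathrm{cond}(\A)\lesssim h^{-4}$. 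Second, for the negative eigenvalues the sharp abstract bound is the quadratic-formula expression visible in \eqref{EigBound}, whose small-magnitude end behaves like $\bigl(-\lambda_{\min}(A)+\sqrt{\lambda_{\min}(A)^2+4\lambda_{\min}(A)\lambda_{\min}(S)}\bigr)/2$ rather than $\min\bigl(\lambda_{\min}(A),\lambda_{\min}(S)\bigr)$ verbatim; since here both $\lambda_{\min}(A)\gtrsim h^{-1}\lambda_{\min}(M_u)\gtrsim h^{2}$ and $\lambda_{\min}(S)\gtrsim h^{-1}\lambda_{\min}(M_\lambda)\gtrsim h^{2}$, that expression is also $\gtrsim h^{2}$, so your conclusion $\|\A^{-1}\|\lesssim h^{-2}$ and hence $\mathrm{cond}(\A)\lesssim h^{-2}$ stands. (A cosmetic point: invoking Corollary~\ref{corinfsup} separately is redundant, since the inf-sup property is already encoded in the lower bound of \eqref{spec2} via \eqref{spec3}.)
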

\begin{proof}
Take $Q_A:=M_u$, $Q_S:=M_\lambda$. Let $\xi$ be an eigenvalue of $Q^{-1}\A$.
We apply the result in Lemma~5.14 from \cite{OlshTyrt} to derive from \eqref{spec1}, \eqref{spec2}:
\begin{equation}\label{EigBound}
\begin{split}
\xi &\in[\nu_{S,1}h^{-1},\nu_{S,2}\,h^{-3}]\cup \left[\frac{\nu_{A,1}+\sqrt{\nu_{A,1}^2+4\nu_{A,1}\nu_{S,1}}}{2h},\frac{\nu_{A,2}+\sqrt{\nu_{A,2}^2+4\nu_{A,2}\nu_{S,2}}}{2h^3}\right]\\
&\cup \left[\frac{\nu_{A,2}-\sqrt{\nu_{A,2}^2+4\nu_{A,2}\nu_{S,2}}}{2h^3},\frac{\nu_{A,1}-\sqrt{\nu_{A,1}^2+4\nu_{A,1}\nu_{S,1}}}{2h}\right].
\end{split}
\end{equation}
From this spectral estimate and the  fact that $Q$ has a uniformly bounded condition number we conclude that \eqref{Condest} holds.
\end{proof}
\ \\[2ex]
Similar to \eqref{EigBound} we also get from \eqref{spec3} the following result.
\begin{corollary}
Let $Q_A \sim A$ be a uniformly spectrally equivalent preconditioner of $A$ and $Q_S:=S_M$. For the spectrum $\sigma(Q^{-1}\A)$
of the preconditioned matrix  we have
\[ \sigma(Q^{-1}\A) \subset \big([C_{-},c_{-}]\cup[c_+,C_+]\big), \]
with some constants
$C_{-} < c_{-} < 0 < c_+ < C_+$ independent of $h$ and the position of $\Gamma$.
\end{corollary}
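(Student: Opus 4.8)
The plan is to reduce the spectral statement to the standard theory for preconditioned symmetric indefinite saddle point systems, exactly as in the proof of Corollary~\ref{corr:pc}, but now using the norm-equivalence $\eqref{spec3}$ in place of the mass-matrix bounds $\eqref{spec1}$--$\eqref{spec2}$. The governing abstract result is Lemma~5.14 from \cite{OlshTyrt}: if the preconditioner block $Q_A$ is uniformly spectrally equivalent to $A$ and $Q_S$ is uniformly spectrally equivalent to the exact Schur complement $S=BA^{-1}B^T$, then the eigenvalues of $Q^{-1}\A$ lie in two fixed intervals, one negative and one positive, bounded away from zero, with endpoints depending only on the equivalence constants. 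So the entire task is to verify that both hypotheses hold with constants independent of $h$ and of how $\Gamma$ cuts the mesh.

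First I would record the two spectral equivalences that feed into the lemma. The assumption $Q_A \sim A$ means there are constants $0 < \gamma_1 \le \gamma_2$, independent of $h$ and of the cut geometry, with $\gamma_1 \la A\vec u,\vec u\ra \le \la Q_A\vec u,\vec u\ra \le \gamma_2 \la A\vec u,\vec u\ra$ for all $\vec u\in\mathbb{R}^n$; this is exactly what is hypothesized. For the Schur block, I would invoke $\eqref{spec3}$, which states $\tilde\nu_{S,1} S_M \le S \le \tilde\nu_{S,2} S_M$ with $Q_S := S_M$, so $Q_S$ is uniformly spectrally equivalent to $S$. The crucial point is that the constants $\tilde\nu_{S,1},\tilde\nu_{S,2}$ in $\eqref{spec3}$ were already shown in Lemma~\ref{lemprecond} to be independent of $h$ and of the position of $\Gamma$, their proof resting on the continuity bound $\eqref{cont2}$ and the uniform discrete inf-sup estimate of Corollary~\ref{corinfsup} via the identity $\eqref{cond3}$.

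Next I would feed these two equivalences into the abstract eigenvalue localization. Writing the generalized eigenvalue problem $\A\vec x = \xi Q\vec x$ in block form and applying Lemma~5.14 from \cite{OlshTyrt}, the eigenvalues separate into a cluster coming from the $A$-block and a cluster coming from the Schur complement. Since both blocks are now enclosed between fixed positive multiples of their preconditioners --- rather than between $h$-dependent multiples as in Corollary~\ref{corr:pc} --- the resulting intervals have endpoints that are fixed positive and negative numbers, giving $\sigma(Q^{-1}\A)\subset\big([C_{-},c_{-}]\cup[c_+,C_+]\big)$ with $C_- < c_- < 0 < c_+ < C_+$ all independent of $h$ and of $\Gamma$. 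In contrast to the $h^{-2}$ growth of $\mathrm{cond}(\A)$ obtained with the purely algebraic preconditioner, here the preconditioned system is uniformly well-conditioned, which is precisely the optimality claim.

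The main obstacle, and the only genuinely nontrivial input, is the mesh- and cut-independence of the Schur-complement equivalence $\eqref{spec3}$; but this has already been established in Lemma~\ref{lemprecond} and rests ultimately on the uniform discrete inf-sup property (Corollary~\ref{corinfsup}), itself a consequence of the stabilization and the norm equivalence in Lemma~\ref{lemcrucial}. Given that result, the present corollary is essentially a bookkeeping application of the abstract saddle-point lemma, and the only care needed is to track that all constants entering Lemma~5.14 depend solely on the four equivalence constants $\gamma_1,\gamma_2,\tilde\nu_{S,1},\tilde\nu_{S,2}$, none of which involves $h$ or the surface position.
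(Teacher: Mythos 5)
Your proposal coincides with the paper's own argument: the paper obtains this corollary precisely by reapplying Lemma~5.14 of \cite{OlshTyrt} (as in the derivation of \eqref{EigBound}), now with the assumed uniform equivalence $Q_A \sim A$ and the cut- and $h$-independent Schur-complement equivalence \eqref{spec3}, whose constants rest on \eqref{cont2} and the uniform inf-sup estimate of Corollary~\ref{corinfsup}, exactly as you trace them. Nothing is missing; replacing the $h$-dependent factors of \eqref{spec1}--\eqref{spec2} by fixed equivalence constants turns the two eigenvalue intervals into $h$-independent ones, which is the whole content of the statement.
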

\ \\[2ex]
Note that the optimal Schur complement preconditioner $S_M$ is easy to implement since the terms occurring in $S_M$ are essentially the same as in the bilinear form $b(\cdot,\cdot)$.  Furthermore, for  $\rho \sim h$ we have the spectral equivalence
\begin{equation}\label{equiv2}
    S_M \sim h^{-1} M_\lambda,
\end{equation}
which follows from \eqref{fund1}. Hence,  systems with the matrix $S_M$ are then easy to solve.

\section{Numerical experiments} \label{sectExp}
In this section we present results of a few numerical experiments. We first consider the vector-Laplace problem \eqref{strongform} on the unit sphere. We use a standard trace-FEM approach in the sense that the exact surface is approximated by a piecewise planar one. Due to this geometric error (${\rm dist}(\Gamma_h,\Gamma) \lesssim h^2$) the discretization accuracy is limited to second order and therefore we consider the discretization \eqref{discrete} with piecewise linears both for the velocity and the Langrange multiplier.  Higher order surface approximations with the technique introduced in \cite{grande2017higher} will be treated in a forthcoming paper. To be able to use a higher order finite element space, and in particular the pair $(V_h^2)^3$-$V_h^1$ for velocity and Lagrange multiplier (which is LBB stable, cf. Corollary~\ref{corinfsup}), we also consider an example in which the surface is a bounded plane which is not aligned with the coordinate axes. For both cases we present results for
discretization errors and their dependence on the stabilization parameter $\rho$. For the problem on the unit sphere we also illustrate the behavior of a preconditioned MINRES solver.

\subsection{Vector-Laplace problem on the unit sphere}
For $\Gamma$ we take the unit sphere, characterized as the zero level of the level set function
$
  \phi(\bx) = \|\bx\|_2 -1$, where $\|\cdot\|_2$ denotes the Euclidean norm on $\R^3$.  We consider the vector-Laplace problem \eqref{strongform} with the prescribed solution $\bu^*=\bP(-x_3^2,x_2,x_1)^T \in\bV_T$. The induced tangential right hand-side is denoted by $\mathbf{f}$ and we use the saddle point formulation \eqref{weak1a} with $\bg = \mathbf{f}$. The associated Lagrange multiplier $\lambda$ is then given by \eqref{charlambda} with $g_N=0$.
The sphere is embedded in an outer domain $\Omega=[-5/3,5/3]^3$. The triangulation $\T_{h_\ell}$ of $\Omega$ consists of $n_\ell^3$ sub-cubes, where each of the sub-cubes is further refined into 6 tetrahedra.   Here $\ell\in\Bbb{N}$ denotes the level of refinement yielding a mesh size $h_\ell= \frac{10/3}{n_\ell}$ with $n_\ell= 2^{\ell+1}$.

On the outer mesh $\T_h^\Gamma$ we define the nodal interpolation operators $I_k:C(\Omega_h^\Gamma) \to V_h^k$. For the TraceFEM, instead of the exact surface $\Gamma$, we consider an approximated surface $\Gamma_h= \{\bx\in\Omega_h^\Gamma:~ (I_1\phi)(\bx)=0\}$, consisting of triangular planar patches. This induces a geometry error with $\mathrm{dist}(\Gamma,\Gamma_h)\lesssim h^2$. The normals $\bn$ are approximated by $\bn_h:=\frac{\nabla\phi_h}{\|\nabla\phi_h\|_2}$,  where $\phi_h$ is the piecewise quadratic interpolant $\phi_h:= I_2(\phi)$.
For the pair of TraceFE spaces $\big((V_h^k)^3,\, V_h^l\big)$ in the saddle point formulation \eqref{discrete}, with $\Gamma$ replaced by $\Gamma_h$, we use  piecewise linear finite elements, i.e.,  $k=l=1$, and we first choose $\rho=\tilde\rho=h$ for the stabilization parameters. The numerical solution on refinement level $\ell=4$ is illustrated in Figure~\ref{fig:solSphere}. The very shape irregular surface triangulation $\Gamma_h$ is illustrated in Figure~\ref{fig:gridSphere}.

\begin{figure}[ht!]
  \begin{minipage}{0.45\textwidth}
    \includegraphics[width=\textwidth]{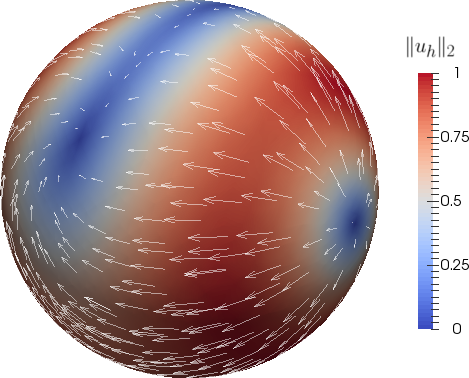}
    \caption{Numerical solution on the sphere for $k=l=1$, $\rho=\tilde\rho=h$ and refinement level $\ell=4$.}
    \label{fig:solSphere}
\end{minipage}
\hfill
\begin{minipage}{0.45\textwidth}
    \includegraphics[width=\textwidth]{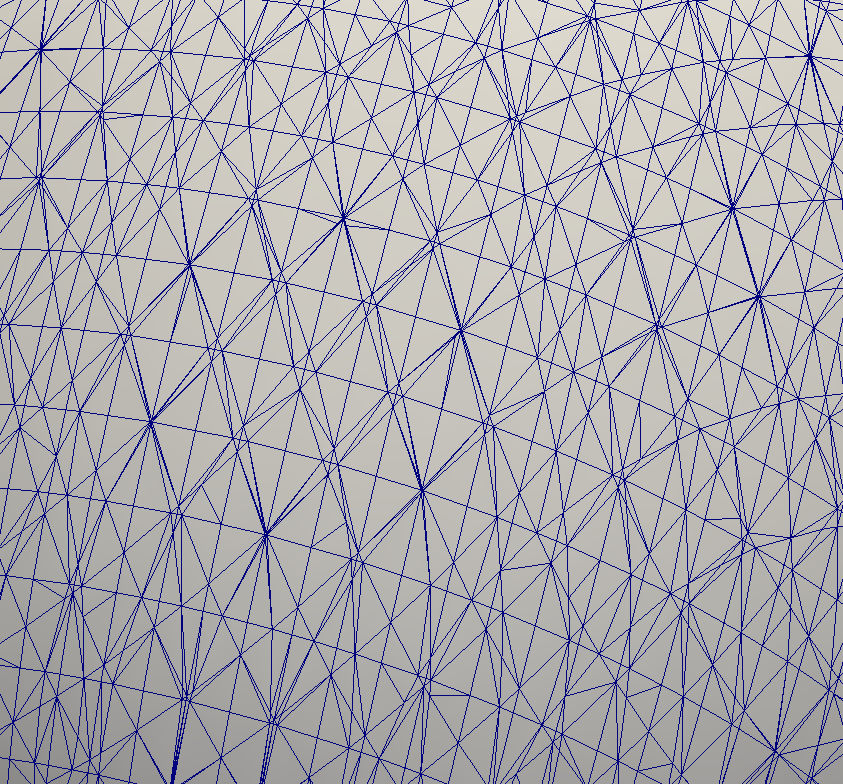}
    \caption{Detail of the surface triangulation $\Gamma_h$ of the sphere for refinement level $\ell=4$.}
    \label{fig:gridSphere}
 \end{minipage}
\end{figure}

Figure~\ref{fig:convSphere1} shows different norms of the errors for different refinement levels $\ell$.

\begin{figure}[ht!]
  \begin{tikzpicture}
  \def\vara{8}
  \def\varb{1.5}
  \begin{semilogyaxis}[ xlabel={Refinement level}, ylabel={Error}, ymin=5E-4, ymax=200, legend style={ cells={anchor=west}, legend pos=outer north east} ]
    \addplot table[x=level, y=U] {sphereP1-h.dat};
    \addplot table[x=level, y=L2P] {sphereP1-h.dat};
    \addplot table[x=level, y=u_N] {sphereP1-h.dat};
    \addplot table[x=level, y=M] {sphereP1-h.dat};
    \addplot[dashed,line width=0.75pt] coordinates { 
      (1,\vara) (2,\vara*0.5) (3,\vara*0.25)(4,\vara*0.125) (5,\vara*0.0625)
    };
    \addplot[dotted,line width=0.75pt] coordinates { 
      (1,\varb) (2,\varb*0.5*0.5) (3,\varb*0.25*0.25) (4,\varb*0.125*0.125) (5,\varb*0.0625*0.0625)
    };
    \legend{$\|\bu^* - \bu_h\|_U$, $\|\bu^* -\bP\bu_h\|_{L^2(\Gamma_h)}$, $\|\bu_h\cdot\bn_h\|_{L^2(\Gamma_h)}$, $\|\lambda - \lambda_h\|_M$, $\mathcal{O}(h)$, $\mathcal{O}(h^2)$}
  \end{semilogyaxis}
  \end{tikzpicture}
  \caption{Discretization errors  for the sphere and $k=l=1$ with $\rho=\tilde\rho=h$.}
  \label{fig:convSphere1}
\end{figure}
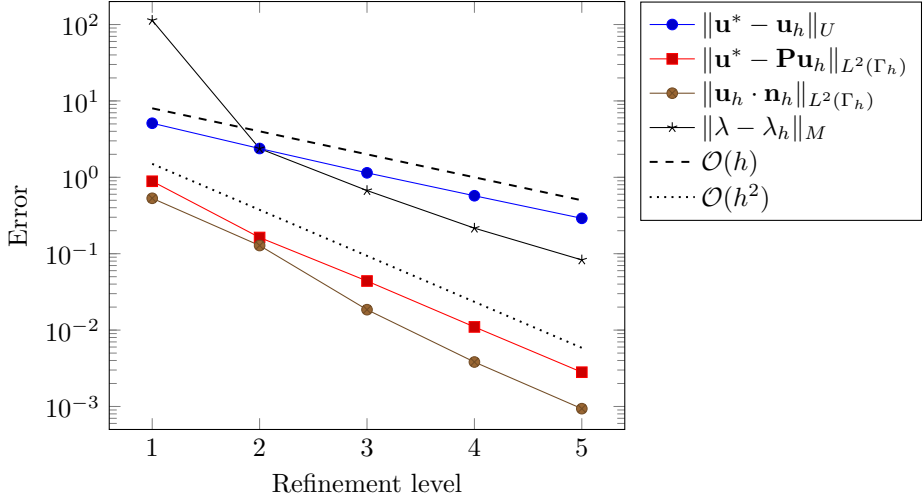

 Optimal convergence orders are achieved for  $\|\bu^*-\bu_h\|_U$ and $\|\bu^* -\bP\bu_h\|_{L^2(\Gamma_h)}$, cf.  the theoretical results in Theorems~\ref{thm2} and \ref{thmL2} (note that in the theoretical analysis we do not treat geometric errors). For $\|\lambda-\lambda_h\|_M$ we observe a convergence order of about 1.5, which is  better than the theoretical result for $\|\lambda-\lambda_h\|_M$. For the normal component of $\bu_h$ we observe  $\|\bu_h\cdot\bn_h\|_{L^2(\Gamma_h)} \sim h^2$, i.e., a faster convergence as predicted by the bound \eqref{eq:normalError}.
From  further experiments (results are not shown) it follows that the same convergence orders are obtained for the choice $\rho=\tilde\rho=1$, but then the errors are slightly increased.

The experiments are repeated for a different choice of the stabilization parameters, taking the other limit case $\rho=\tilde\rho=1/h$. The convergence results are presented in Figure~\ref{fig:convSphere2}.

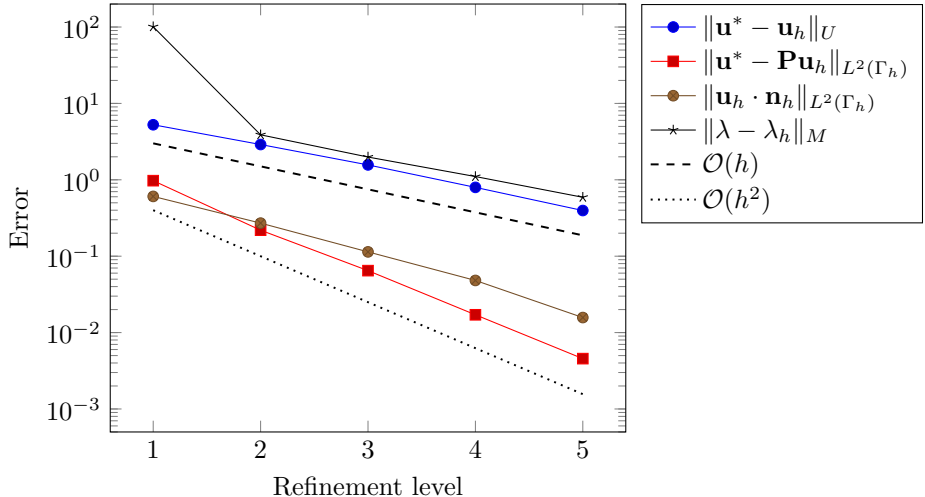
\begin{figure}[ht!]
  \begin{tikzpicture}
  \def\vara{3}
  \def\varb{0.4}
  \begin{semilogyaxis}[ xlabel={Refinement level}, ylabel={Error}, ymin=5E-4, ymax=200, legend style={ cells={anchor=west}, legend pos=outer north east} ]
    \addplot table[x=level, y=U] {sphereP1-hinv.dat};
    \addplot table[x=level, y=L2P] {sphereP1-hinv.dat};
    \addplot table[x=level, y=u_N] {sphereP1-hinv.dat};
    \addplot table[x=level, y=M] {sphereP1-hinv.dat};
    \addplot[dashed,line width=0.75pt] coordinates { 
      (1,\vara) (2,\vara*0.5) (3,\vara*0.25)(4,\vara*0.125) (5,\vara*0.0625)
    };
    \addplot[dotted,line width=0.75pt] coordinates { 
      (1,\varb) (2,\varb*0.5^2) (3,\varb*0.25^2) (4,\varb*0.125^2) (5,\varb*0.0625^2)
    };
    \legend{$\|\bu^* - \bu_h\|_U$, $\|\bu^* -\bP\bu_h\|_{L^2(\Gamma_h)}$, $\|\bu_h\cdot\bn_h\|_{L^2(\Gamma_h)}$, $\|\lambda - \lambda_h\|_M$, $\mathcal{O}(h)$, $\mathcal{O}(h^{2})$}
  \end{semilogyaxis}
  \end{tikzpicture}
  \caption{Discretization errors for the sphere and $k=l=1$ with $\rho=\tilde\rho=1/h$.}
  \label{fig:convSphere2}
\end{figure}

Compared to Figure~\ref{fig:convSphere1}, the reported errors are all larger, especially those for $\|\lambda-\lambda_h\|_M$ and $\|\bu_h\cdot\bn_h\|_{L^2(\Gamma_h)}$, but the convergence orders still behave in an optimal way, as expected from the error analysis. Only first order convergence is achieved for $\|\lambda-\lambda_h\|_M$ compared to order 1.5 for the choice $\rho=\tilde\rho=h$. For $\|\bu_h\cdot\bn_h\|_{L^2(\Gamma_h)}$ the convergence order drops below 2.

As noted above, due to the geometry error  we do not consider the higher order case $k>1$ here.

Next we study the performance of the iterative solver and the preconditioners. To solve the linear saddle point system \eqref{SLAE} a MINRES solver is applied using the block preconditioner $Q$ presented in Section~\ref{s_cond}. For the application of $\vec  u=Q_A^{-1}\vec b$, the system $A\vec u = \vec b$ is iteratively solved using a standard SSOR-preconditioned CG solver, with a tolerance such that the initial residual is reduced by a factor of $10^4$. The same strategy is used for the Schur complement, i.e., for the application of $ \vec v=Q_S^{-1}\vec c$, the system $S_M\vec v = \vec c$ is iteratively solved using a standard SSOR-preconditioned CG solver, with a tolerance such that the initial residual is reduced by a factor of $10^4$.  We note that these high tolerances are not optimal for the overall efficiency of the preconditioned MINRES solver, but are appropriate to demonstrate important properties of the solver. Furthermore, in practice it is probably more efficient to replace the SSOR
preconditioner for the $A$-block by a more efficient one, e.g., an  ILU or a multigrid method.
Iteration numbers used for the solution of the linear systems are reported in Tables~\ref{tab:iter} and \ref{tab:iter2}. Here $N$ denotes the number of MINRES iterations needed to reduce the residual by a factor of $10^6$ (initial vector $\vec x_0=0$). $N_A$ and $N_S$ denote average PCG iteration numbers (per MINRES iteration) needed to apply the preconditioners $Q_A$ and $Q_S$, respectively.

\begin{table}[ht!]
  \begin{minipage}{0.45\textwidth}
  \centering
    \begin{tabular}{l r r r}
      \toprule
      level $\ell$ & $N_A$ & $N_S$ & $N$ \\ \midrule
	1	& 8.2&	6.7	& 45\\
	2	& 17.4&	7.0	& 29\\
	3	& 35.4&	7.3	& 29\\
	4	& 69.0&	8.4	& 28\\
	5	& 138.6&	9.1	& 28\\      \bottomrule
    \end{tabular}
    \caption{Average PCG iteration numbers ($N_A, N_S$) for application of $Q_A^{-1}, Q_S^{-1}$, respectively, and MINRES iteration numbers ($N$) for different refinement levels and $\rho=\tilde\rho=h$.}
    \label{tab:iter}
 \end{minipage}
 \hfill
\begin{minipage}{0.45\textwidth}
  \centering
    \begin{tabular}{l r r r}
      \toprule
      level $\ell$ & $N_A$ & $N_S$ & $N$ \\ \midrule
	1	& 8.0&	7.5	& 64\\
	2	& 16.7 & 13.5	& 38\\
	3	& 33.0 & 23.3	& 34\\
	4	& 64.2 & 46.1	& 32\\
	5	& 126.4& 89.9	& 29\\      \bottomrule
    \end{tabular}
    \caption{Average PCG iteration numbers ($N_A, N_S$) for application of $Q_A^{-1}, Q_S^{-1}$, respectively, and MINRES iteration numbers ($N$) for different refinement levels and $\rho=\tilde\rho=1/h$.}
    \label{tab:iter2}
 \end{minipage}
\end{table}

We first discuss the case $\rho=\tilde\rho=h$, cf. Table~\ref{tab:iter}. The number of MINRES iterations does not grow if the refinement level $\ell$ increases, illustrating the optimality of the block preconditioner, cf. Corollary~\ref{corr:pc}.  As expected, cf. \eqref{equiv2}, the numbers $N_S$ are essentially independent of $\ell$. The numbers $N_A$ are doubled for each refinement and show a behavior very similar to the usual behavior of SSOR-CG applied to a standard 2D Poisson problem (discretized by linear finite elements). In this paper we do not study the topic of more efficient preconditioners for $Q_A$.

For the case $\rho=\tilde\rho=1/h$, cf. Table~\ref{tab:iter2}, the average iteration number $N_S$ of the Schur complement preconditioner shows a similar behavior as $N_A$, i.e., a doubling of the iteration numbers for each refinement step. These iteration numbers $N_S$ indicate $\mathrm{cond}(S_M)\sim h^{-2}$. Note that for \eqref{equiv2} to hold we used a scaling assumption $\rho=\tilde\rho=h$.    We also observe that (slightly) more MINRES iterations $N$ are needed compared to the case $\rho=\tilde\rho=h$.

In view of the  results for the discretization errors and the results for the iterative solver obtained in these numerical experiments the parameter choice $\rho=\tilde\rho=h$ is recommended.
\medskip

\subsection{Vector-Laplace problem on a planar surface} \label{sec:numExpPlane}
Let $G$ be the plane defined by  zero level of the level set function
$
  \phi(\bx) = -x_3+4x_1-\frac{13}{3}$ and $\Omega \subset \Bbb{R}^3$ a bounded outer domain that is intersected by $G$. We take $\Gamma:=G \cap \Omega$.
 We consider the vector-Laplace problem \eqref{strongform} with the prescribed solution
\begin{equation*}
\bu^*(\mathbf{x}) = (\frac{4}{17} \sin(\pi x_2)\sin(\pi x_3), \sin(\pi x_2)\sin(\pi x_3), \frac{16}{17}\sin(\pi x_2)\sin(\pi x_3))^T \in \bV_T.
\end{equation*}
The induced tangential right hand-side is denoted by $\mathbf{f}$ and we use the saddle point formulation \eqref{weak1a} with $\bg = \mathbf{f}$. The associated Lagrange multiplier is $\lambda=0$.
Concerning the choice of the outer domain $\Omega$ there is an issue concerning Dirichlet boundary conditions on $\partial\Omega \cap G=\partial \Gamma$. It turns out that we obtain significantly larger  discretization errors if the parts of $\partial \Omega$ that are intersected by $G$ are not perpendicular to $G$. This effect is not understood, yet. Therefore in this experiment  we choose $\Omega$ as the parallelepiped with origin $(-2,-2,-65/48)$ and spanned by the vectors $(4,0,-1)^T$, $(0,4,0)^T$ and $(0,0,4.25)^T$.   Then the parts of $\partial \Omega$ that are intersected by $G$ are  perpendicular to $G$. The construction is such that on $\partial\Omega \cap G=\partial \Gamma$ we can use homogeneous  Dirichlet boundary conditions.

The triangulation $\T_{h_\ell}$ of $\Omega$ consists of $n_\ell^3$ sub-parallelepipeds, where each of the sub-parallelepipeds is further refined into 6 tetrahedra. Here $\ell\in\Bbb{N}$
denotes the level of refinement yielding a mesh size $h_\ell= \frac{4}{n_\ell}$ with $n_\ell= 2^{\ell+2}$. Note that for this specific example, the approximation of the surface $\Gamma$ and the normals $\bn$ is exact, i.e., $\Gamma_h=\Gamma$ and $\bn_h=\bn$.

The numerical solution on refinement level $\ell=4$ is illustrated in Figure~\ref{fig:solPlane}. The (very shape irregular)  surface triangulation is illustrated in Figure~\ref{fig:gridPlane}.

\begin{figure}[ht!]
  \begin{minipage}{0.45\textwidth}
    \includegraphics[width=\textwidth]{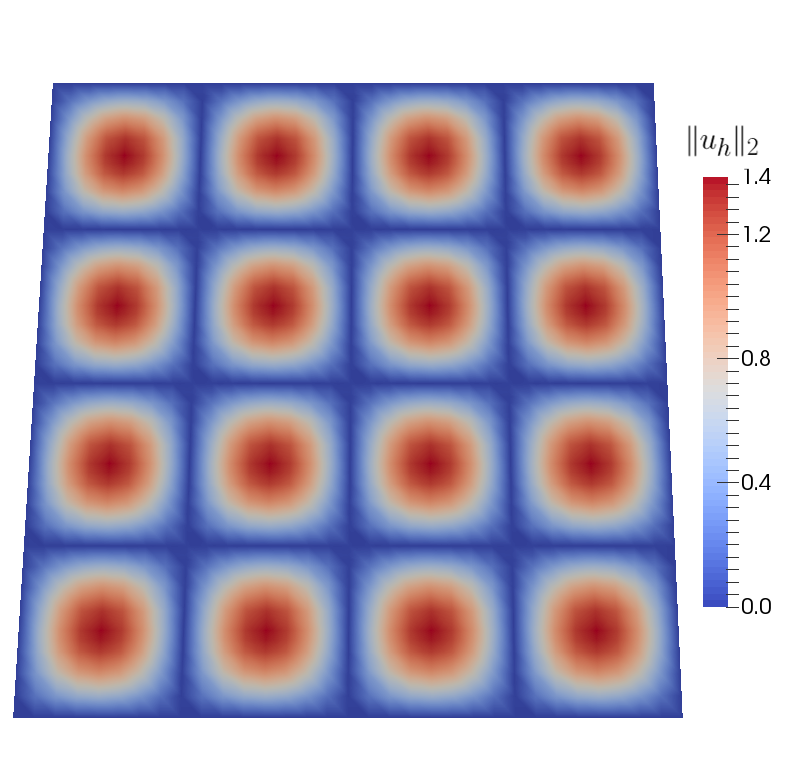}
    \caption{Numerical solution on the plane for $k=l=1$, $\rho=\tilde\rho=h$ and refinement level $\ell=3$.}
    \label{fig:solPlane}
\end{minipage}
\hfill
\begin{minipage}{0.45\textwidth}
    \includegraphics[width=\textwidth]{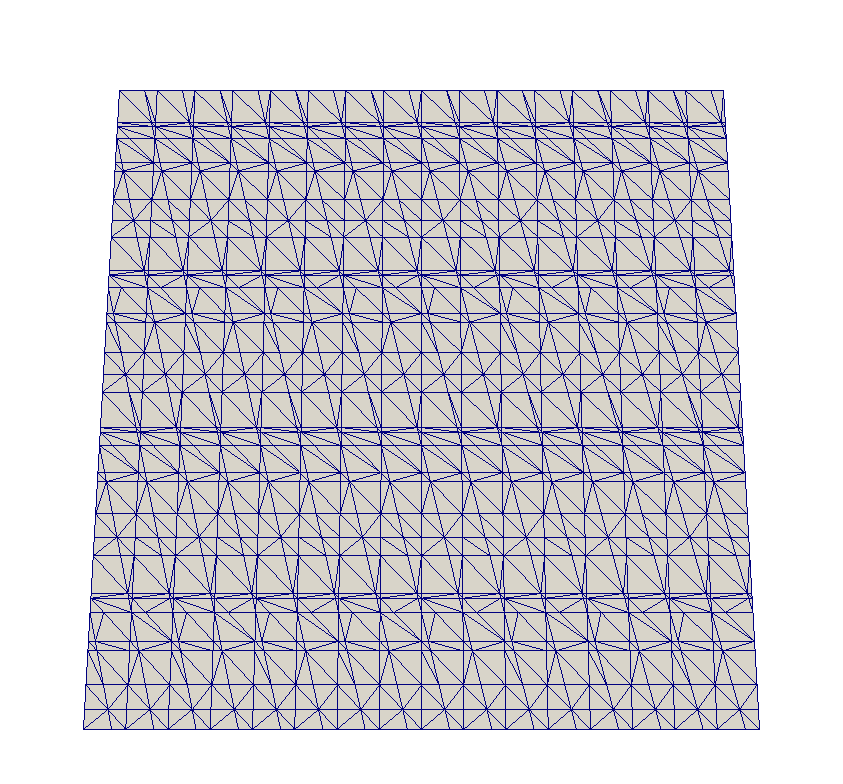}
    \caption{Surface triangulation for refinement level $\ell=1$.}
    \label{fig:gridPlane}
 \end{minipage}
\end{figure}


Note that $\lambda=\lambda_h=0$ due to $\mathbf{H}=0$, cf. \eqref{charlambda}, so in the following we only consider errors in $\bu_h$.
In view of the recommendation at the end of the section above, we only present  results with the stabilization parameter $\rho = \tilde{\rho} = h$.
Figures \ref{fig:convPlane1} and \ref{fig:convPlane2} show the errors $\|\bu^*-\bu_h\|_U$ and $\|\bu^* - \bP\bu_h\|_{L^2(\Gamma)}$ for the cases $k=l=1$ and $k=2,\, l=1$, respectively. In both cases, optimal convergence orders $\|\bu^*-\bu_h\|_U \sim h^k$ and $\|\bu^* -\bP\bu_h\|_{L^2(\Gamma)} \sim h^{k+1}$ are achieved.  In this special planar setting we have $\bu_h\cdot\bn_h=0$.

\begin{figure}[ht!]
  \begin{tikzpicture}
  \def\vara{8}
  \def\varb{2}
  \begin{semilogyaxis}[ xlabel={Refinement level}, ylabel={Error}, ymin=5E-4, ymax=200, legend style={ cells={anchor=west}, legend pos=outer north east} ]
    \addplot table[x=level, y=U] {PlaneP1.dat};
    \addplot table[x=level, y=L2] {PlaneP1.dat};
    \addplot[dashed,line width=0.75pt] coordinates { 
      (1,\vara) (2,\vara*0.5) (3,\vara*0.25)(4,\vara*0.125) (5,\vara*0.0625)
    };
    \addplot[dotted,line width=0.75pt] coordinates { 
      (1,\varb) (2,\varb*0.5*0.5) (3,\varb*0.25*0.25) (4,\varb*0.125*0.125) (5,\varb*0.0625*0.0625)
    };
    \legend{$\|\bu^* - \bu_h\|_U$, $\|\bu^* -\bP\bu_h\|_{L^2(\Gamma)}$, $\mathcal{O}(h)$, $\mathcal{O}(h^2)$}
  \end{semilogyaxis}
  \end{tikzpicture}
  \caption{Discretization errors for planar $\Gamma$, $k=l=1$ and $\rho=\tilde\rho=h$.}
  \label{fig:convPlane1}
\end{figure}

\begin{figure}
  \begin{tikzpicture}
  \def\vara{1.5}
  \def\varb{0.3}
  \begin{semilogyaxis}[ xlabel={Refinement level}, ylabel={Error}, ymin=5E-6, ymax=100, legend style={ cells={anchor=west}, legend pos=outer north east} ]
    \addplot table[x=level, y=U] {PlaneP2.dat};
    \addplot table[x=level, y=L2] {PlaneP2.dat};
    \addplot[dashed,line width=0.75pt] coordinates { 
      (1,\vara) (2,\vara*0.5*0.5) (3,\vara*0.25*0.25) (4,\vara*0.125*0.125) (5,\vara*0.0625*0.0625)
    };
    \addplot[dotted,line width=0.75pt] coordinates { 
      (1,\varb) (2,\varb*0.5*0.5*0.5) (3,\varb*0.25*0.25*0.25) (4,\varb*0.125*0.125*0.125) (5,\varb*0.0625*0.0625*0.0625)
    };
    \legend{$\|\bu^* - \bu_h\|_U$, $\|\bu^* -\bP\bu_h\|_{L^2(\Gamma)}$, $\mathcal{O}(h^2)$, $\mathcal{O}(h^{3})$}
  \end{semilogyaxis}
  \end{tikzpicture}
  \caption{Discretization errors for planar $\Gamma$,  $k=2$, $l=1$ and $\rho=\tilde\rho=h$.}
  \label{fig:convPlane2}
\end{figure}

\bibliographystyle{siam}
\bibliography{literatur}{}

\end{document}